\newtheorem{theorem}{Theorem}[section] 
\newtheorem{lemma}[theorem]{Lemma}     
\newtheorem{corollary}[theorem]{Corollary}
\title{Extending small arcs to large arcs} 
\author{Simeon Ball}
\begin{document}
\maketitle

\begin{abstract}
An arc is a set of vectors of the $k$-dimensional vector space over the finite field with $q$ elements ${\mathbb F}_q$, in which every subset of size $k$ is a basis of the space, i.e. every $k$-subset is a set of linearly independent vectors. 
Given an arc $G$ in a space of odd characteristic, we prove that there is an upper bound on the largest arc containing $G$. The bound is not an explicit bound but is obtained by computing properties of a matrix constructed from $G$. In some cases we can also determine the largest arc containing $G$, or at least determine the hyperplanes which contain exactly $k-2$ vectors of the large arc.
The theorems contained in this article may provide new tools in the computational classification and construction of large arcs. The article also simplifies some of the proofs of the results found in \cite{Ball2012},  \cite[Chapter 7]{Ball2015}, \cite{BdB2012} and \cite{Chowdhury2015} and unifies the approach taken in those articles with that in \cite{BBT1990} and \cite{Segre1967}.
\end{abstract}


\section{Introduction} 

Let $\mathrm{V}_k({\mathbb F}_q)$ denote the $k$-dimensional vector space over ${\mathbb F}_q$, the finite field with $q$ elements.

An {\em arc} of $\mathrm{V}_k({\mathbb F}_q)$ is a set $S$ of vectors of $\mathrm{V}_k({\mathbb F}_q)$ in which every subset of size $k$ is a basis of $\mathrm{V}_k({\mathbb F}_q)$. Most authors define an arc, equivalently, as a set of points in the corresponding projective space. However, for the techniques that we wish to develop here it is more convenient to use the vector space. The set of columns of a generator matrix of a $k$-dimensional linear maximum distance separable (MDS) code over ${\mathbb F}_q$ is an arc of $\mathrm{V}_k({\mathbb F}_q)$ and vice-versa, so arcs and linear MDS codes are equivalent objects. As in coding theory, we define the {\em weight} of a vector to be the number of non-zero coordinates that it has.

We will assume throughout that $k\geqslant 3$.

Let $\det(v_1,\ldots,v_k)$ denote the determinant of the matrix whose $i$-th row is $v_i$, a vector of ${\mathrm V}_k({\mathbb F}_q)$. If $C=\{p_1,\ldots,p_{k-1} \}$ is an ordered set of $k-1$ vectors then we write
$$
\det(u,C)=\det(u,p_1,\ldots,p_{k-1}),
$$
where we evaluate the determinant with respect to a fixed canonical basis. 

Given an arc $G$ of $\mathrm{V}_k({\mathbb F}_q)$ and a non-negative integer $n \leqslant |G|-k$, we order the elements of $G$ arbitrarily and construct a matrix $\mathrm{M}_n$, in the following way. For each subset $E$ of $G$ of size $|G|-n$ and subset $A$ of $E$ of size $k-2$, we get a column of the matrix $\mathrm{M}_{ n}$, whose rows are indexed by subsets $C$ of $G$ of size $k-1$, where a $(C,(A,E))$ entry is
$$
\prod_{u \in G \setminus E} \det(u,C),
$$
if and only if $A \subset C$, and zero otherwise.

We shall prove the following theorem.

\begin{theorem} \label{wone}
If there is a vector of weight one in the column space of $\mathrm{M}_{ n}$ then $G$ cannot be extended to an arc of size $q+2k+n-1-|G|$.
\end{theorem}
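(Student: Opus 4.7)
My plan is a proof by contradiction: suppose $G$ extends to an arc $S$ of size $q+2k+n-1-|G|$ and set $F = S \setminus G$. I will produce a vector $\mathbf{r}$, indexed by the $(k-1)$-subsets of $G$, that lies in the left kernel of $\mathrm{M}_n$ and has no zero coordinate. Any weight-one vector $e_{C_0}$ in the column space of $\mathrm{M}_n$ would then force $0 = \mathbf{r}^\top e_{C_0} = r_{C_0} \neq 0$, delivering the required contradiction.

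The natural candidate for $\mathbf{r}$ is $r_C = \prod_{w \in F}\det(w, C)$. Each factor is non-zero because $\{w\}\cup C$ is a $k$-subset of the arc $S$, hence a basis, so $r_C \neq 0$ for every $C$. To check $\mathbf{r}^\top \mathrm{M}_n = \mathbf{0}$, first note that the $(C,(A,E))$-entry already vanishes unless $C \subset E$: any $c \in C \setminus E$ lies in $G \setminus E$ and contributes a factor $\det(c, C) = 0$. Writing $C = A \cup \{v\}$ with $v \in E \setminus A$ and using the disjoint union $F \cup (G \setminus E) = S \setminus E$, the $(A, E)$-coordinate of $\mathbf{r}^\top \mathrm{M}_n$ simplifies to
$$
\sum_{v \in E \setminus A}\prod_{x \in S\setminus E}\det(x, A, v).
$$

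The heart of the argument is to show this sum vanishes. I would parameterise the pencil of hyperplanes through $\mathrm{span}(A)$ by fixing $v_1, v_2 \in \mathrm{V}_k$ linearly independent modulo $\mathrm{span}(A)$ and associating coordinates $(\alpha_u, \beta_u) \in \mathbb{F}_q^2$ to every $u \notin \mathrm{span}(A)$. A cofactor expansion gives $\det(x, A, v) = D(\alpha_x\beta_v - \beta_x\alpha_v)$ with $D = \det(v_1, A, v_2) \neq 0$, and the arc property forces the $|S|-k+2$ projected points of $S \setminus A$ to be distinct in $\mathbb{P}^1(\mathbb{F}_q)$. The precise relation $|S|+|G| = q+2k+n-1$ is chosen exactly so that the degrees $|E\setminus A|$ and $|S\setminus E|$ match the input of a Segre-type tangent identity: one compares $\prod_{x \in S \setminus E}(\alpha_x\beta - \beta_x\alpha)$ with the universal polynomial $\alpha^q\beta - \alpha\beta^q$ vanishing on $\mathbb{P}^1(\mathbb{F}_q)$ and feeds the resulting polynomial identity into the arc-representatives to conclude the vanishing. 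Carrying out this Segre-type calculation while tracking the specific vector representatives of the projective points of the arc is the main technical obstacle, and I would adapt the arguments of \cite{Ball2012}, \cite[Chapter 7]{Ball2015} and \cite{BBT1990} to handle it in the matrix setup above.
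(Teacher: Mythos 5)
Your high-level strategy is exactly the paper's: produce a vector in the left kernel of $\mathrm{M}_n$ with no zero coordinate, and note that pairing it with a weight-one vector of the column space gives $0=r_{C_0}\neq 0$ (the paper does this with the vector $v_G$ of Lemma~\ref{upsolution}). Your reduction of the kernel condition for the candidate $r_C=\prod_{w\in S\setminus G}\det(w,C)$ to the vanishing of
$$
\sum_{v\in E\setminus A}\ \prod_{x\in S\setminus E}\det(x,A,v)
$$
is also sound (for odd $q$ the reordering signs cancel because $|S\setminus E|\equiv q-1 \pmod 2$ is even, and for even $q$ signs are irrelevant). The fatal gap is that this sum does \emph{not} vanish: your candidate vector is simply not in the left kernel of $\mathrm{M}_n$, so the ``Segre-type calculation'' you defer to the end cannot be carried out. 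Concretely, take $q=7$, $k=3$, $S=\{(1,s,s^2):s\in{\mathbb F}_7\}\cup\{(0,0,1)\}$ the conic (an arc of size $8$), $G=E=\{(1,0,0),(1,1,1),(1,2,4),(1,3,2)\}$, $n=0$, $A=\{a\}$ with $a=(1,0,0)$; then $q+2k+n-1-|G|=8=|S|$, so $G$ extends as required, and $S\setminus E=\{(1,4,2),(1,5,4),(1,6,1),(0,0,1)\}$. Here $\det(x,a,v)=x_3v_2-x_2v_3$, and the three products $\prod_{x\in S\setminus E}\det(x,a,v)$ for $v=(1,1,1),(1,2,4),(1,3,2)$ are $4$, $6$, $3$ respectively, so the sum is $13\equiv 6\not\equiv 0\pmod 7$. (The identity does happen to hold for the conic over ${\mathbb F}_5$, so testing only a very small case is misleading.)

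What is missing is precisely the content of Sections 2 and 3 of the paper: the terms must carry weights coming from the tangent functions of $S$, and these cannot be dropped. The valid identity is Lemma~\ref{thesumiszero}, $\sum_{e\in E\setminus A}f_A(e)\prod_{u\in E\setminus(A\cup\{e\})}\det(u,e,A)^{-1}=0$, and the lemma of tangents (Lemma~\ref{segre1}, packaged through Lemma~\ref{atoc} into Lemma~\ref{theeqn}) shows that $f_A(e)\prod_{u\in S\setminus(A\cup\{e\})}\det(u,e,A)$ is independent of $e$, i.e. $f_A(e)$ behaves like the \emph{reciprocal} $\bigl(\prod_{u\in S\setminus(A\cup\{e\})}\det(u,e,A)\bigr)^{-1}$. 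Substituting this into Lemma~\ref{thesumiszero}, the correct left-kernel vector has $C$-coordinate proportional (up to sign bookkeeping) to $\bigl(\prod_{u\in S\setminus C}\det(u,C)\prod_{z\in G\setminus C}\det(z,C)\bigr)^{-1}$, which is what the paper's $v_G$, with coordinates $\alpha_C\prod_{z\in G\setminus C}\det(z,C)^{-1}$, encodes. Your vector differs from this one by the factor $\bigl(\prod_{u\in S\setminus C}\det(u,C)\bigr)^{2}$, which is not constant in $C$; that is exactly why it fails, as the computation above confirms. So the proposal is not a proof and cannot be repaired without reinstating the coefficients $\alpha_C$ built from the $f_A$'s --- at which point one is reconstructing the paper's argument, Lemmas~\ref{segre1}--\ref{upsolution}.
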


If $q$ is odd then Theorem~\ref{wone} always provides at least some upper bound on the size of an arc $S$ containing $G$. To see this, consider $M_{|G|-k}$ and the columns for a fixed subset $E$ of $G$ of size $k$. Restricting to rows which are not the all-zero row vector, one obtains a copy of an inclusion matrix whose rows are indexed by the $(k-1)$-subsets of $E$ and whose columns are indexed by $(k-2)$-subsets of $E$. Since $E$ is fixed, the non-zero entries in any row are the same. Therefore, to calculate the rank we can divide out by this non-zero element and assume that each entry of the matrix is either $0$ or $1$. It's straightforward to verify that this matrix has rank $k$ if $q$ is odd (see \cite{Frankl1990} or \cite{Wilson1990} for a general formula for $p$-ranks of inclusion matrices) and hence full row rank, which implies there is a vector of weight one in the column space of $\mathrm{M}_{|G|-k}$. 

A weaker version of Theorem~\ref{wone} appears in \cite{Chowdhury2015}, where the condition on $\mathrm{M}_n$ is that it should have full row rank, although the stronger Theorem~\ref{wone} is also observed as a remark. There are many examples where $M_n$ does not have full row rank but where it does have a vector of weight one in its column space. For example, the arc of size $7$
$$
G=\{ [ \tau^0, 0, 0 ], [ 0, \tau^0, 0 ], 
  [ 0, 0, \tau^0 ], [ \tau^0, \tau^5, \tau^0 ], 
  [ \tau^0, \tau^8, \tau^9 ], [ \tau^0, \tau, \tau^5 ], 
  [ \tau^0, \tau^3, \tau ]  \},
$$
where $\tau$ is a primitive element of ${\mathbb F}_{11}$. The matrix $\mathrm{M}_{ 2}$ does not have full row rank (it has rank 20, whereas full row-rank would be 21), but it does have a vector of weight one in its column space. Therefore, Theorem~\ref{wone} implies that it cannot be extended to an arc of size $11$. It can be extended to an arc of size $10$.

We say that $(G,n)$ has {\em Property W} if for each subset $A$ of $G$ of size $k-2$, there exist subsets $C,C_1,\ldots,C_{|G|-n-k+1}$ of $G$ of size $k-1$ containing $A$, such that the column space of $\mathrm{M}_{ n}$ contains a vector (of weight two) with non-zero coordinates at only $C$ and $C_i$ for each $i=1,\ldots,|G|-n-k+1$.

We define a {\em co-secant} to an arc $S$ of $\mathrm{V}_k({\mathbb F}_q)$ to be a hyperplane containing precisely $k-2$ vectors of $S$. An arc is {\em complete} if it is not contained in a larger arc, and an arc is {\em uniquely completable} if there is only one complete arc containing it.

We shall also prove the following theorem.

\begin{theorem} \label{wtwo}
If $(G,n)$ has Property $W$ and $G$ can be extended to an arc $S$ of size $q+2k+n-1-|G|$ then the co-secants to $S$ containing only points of $G$ are determined by $G$.
\end{theorem}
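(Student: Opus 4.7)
The plan is a Segre-style lemma-of-tangents argument. Suppose $G$ extends to an arc $S$ of size $s=q+2k+n-1-|G|$, and fix a $(k-2)$-subset $A$ of $G$. The hyperplanes through $\langle A\rangle$ split into exactly $s-k+2$ secants and $q+k-s-1=|G|-k-n$ tangents to $S$; these tangents are precisely the co-secants to $S$ meeting $S$ only in $A$, so determining them for every $(k-2)$-subset $A\subset G$ determines the co-secants to $S$ containing only points of $G$.

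Use the projection $\pi_A\colon \mathrm{V}_k(\mathbb{F}_q)\to \mathrm{V}_k(\mathbb{F}_q)/\langle A\rangle\cong\mathbb{F}_q^2$, writing $\bar u=\pi_A(u)$, to identify hyperplanes through $\langle A\rangle$ with $\mathbb{P}^1(\mathbb{F}_q)$: secant directions correspond to $\{[\bar u]:u\in S\setminus A\}$ and tangent directions to the complementary $|G|-k-n$ points. A direct computation gives $\det(u,A\cup\{v\})=\epsilon_A\det(\bar u,\bar v)$ for a non-zero $\epsilon_A\in\mathbb{F}_q^\times$, so the $(A\cup\{v\},(A,E))$-entry of $\mathrm{M}_n$ equals $\epsilon_A^n F_{G\setminus E}(\bar v)$, where $F_U(X)=\prod_{u\in U}\det(\bar u,X)$ is homogeneous of degree $|U|=n$ in two variables. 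Since $|G|-k+2>n$ and the $[\bar u]$ for $u\in G\setminus A$ are distinct, the $F_{G\setminus E}$ span every degree-$n$ homogeneous polynomial in two variables. Property~$W$ therefore furnishes, for each pair $(C,C_i)=(A\cup\{v\},A\cup\{v_i\})$, a degree-$n$ polynomial $p_i$ vanishing on $\{\bar{v'}:v'\in G\setminus A\setminus\{v,v_i\}\}$ with the ratio $p_i(\bar v)/p_i(\bar{v_i})$ prescribed.

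For the Segre step, let $T_A(X)$ be the polynomial of degree $|G|-k-n$ whose simple roots are the tangent directions, and let $P_A(X)=\prod_{u\in G\setminus A}\det(\bar u,X)$ and $R_A(X)=\prod_{u\in S\setminus G}\det(\bar u,X)$. Since the roots of $P_AR_AT_A$ fill $\mathbb{P}^1(\mathbb{F}_q)$, $P_A(X)R_A(X)T_A(X)=c\,X_1X_2(X_1^{q-1}-X_2^{q-1})$ for some $c\in\mathbb{F}_q^\times$; in particular, $P_A$ being known, $R_AT_A$ is determined up to scalar. Each $p_i$, combined with this identity, yields one linear relation selecting which of the $s-k$ roots of $R_AT_A$ are roots of $T_A$. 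Property~$W$ provides $|G|-n-k+1$ such relations, one more than the projective dimension $|G|-k-n$ of the space of candidate $T_A$, and the contrapositive of Theorem~\ref{wone} guarantees non-degeneracy; together they determine $T_A$ up to a non-zero scalar and hence its roots, the tangent directions through $\langle A\rangle$.

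The main obstacle I anticipate is the algebraic bookkeeping in the second paragraph: columns $(A',E)$ with $A'\neq A$ but $A'\subset A\cup\{v\}$ also contribute to rows $A\cup\{v\}$, so a weight-two vector in the column space of $\mathrm{M}_n$ need not be built purely from $A$-labelled columns. The technical heart of the proof is to verify that these extra contributions are absorbed consistently by the Segre identity, so that each weight-two vector still yields one clean linear condition on the coefficients of $T_A$.
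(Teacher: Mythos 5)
You begin correctly --- reducing the theorem to determining, for each $(k-2)$-subset $A$ of $G$, which directions through $\langle A\rangle$ are tangent directions --- but the mechanism you propose for extracting this from Property $W$ has a genuine gap, and the one concrete construction you give is vacuous. A vector supported only on the rows $A\cup\{v\}$, $A\cup\{v_i\}$ cannot be a combination of $A$-labelled columns alone: by your own projection formula, a combination $\sum_E c_E\,\mathrm{col}_{(A,E)}$ has, in the row indexed by $A\cup\{v'\}$, the value (up to a non-zero row-dependent constant) $p(\bar v')$, where $p=\sum_E c_E F_{G\setminus E}$ is homogeneous of degree $n$; demanding that this vanish for all $v'\in G\setminus A$ other than $v,v_i$ imposes $|G|-k=n+t$ zeros at pairwise independent directions, so for $t\geqslant 1$ (the only non-trivial case, $t=|G|-k-n$) it forces $p\equiv 0$ and the vector is zero. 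Hence the polynomials $p_i$ of your second paragraph do not exist, and every weight-two vector provided by Property $W$ necessarily involves columns $(A',E)$ with $A'\neq A$. This is exactly the difficulty you flag in your last paragraph and then defer; but what you call verifying that the extra contributions ``are absorbed consistently by the Segre identity'' is not a loose end --- it is the entire proof, and your identity $P_AR_AT_A=c\,X_1X_2(X_1^{q-1}-X_2^{q-1})$ cannot supply it, because it merely restates that every direction through $\langle A\rangle$ is a secant or a tangent direction and relates no data attached to different $(k-2)$-sets $A'$. The closing count (``one more relation than the projective dimension'') and the appeal to the contrapositive of Theorem~\ref{wone} for non-degeneracy are likewise unsubstantiated, since no linear conditions on the coefficients of $T_A$ are ever exhibited.

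The paper closes this gap with a device your outline lacks: a single vector $v_G$, with $C$-coordinate $\alpha_C\prod_{z\in G\setminus C}\det(z,C)^{-1}$, where the normalisation $\alpha_C$ is manufactured --- via the coordinate-free lemma of tangents (Lemma~\ref{segre1}) and the sign bookkeeping of Lemma~\ref{atoc} --- to depend on the $(k-1)$-set $C$ alone and not on any decomposition $C=A'\cup\{e\}$. Lemma~\ref{theeqn} and Lemma~\ref{upsolution} show that if the extension $S$ exists then $v_G\mathrm{M}_n=0$, so $v_G$ annihilates every vector $w$ of the column space, \emph{however $w$ is assembled from mixed columns}; this is what makes your worry about $A'\neq A$ contributions evaporate. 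For a weight-two $w$ supported on $C=A\cup\{x\}$ and $C_i=A\cup\{y_i\}$ the orthogonality gives $(v_G)_C w_C+(v_G)_{C_i}w_{C_i}=0$, so the ratio $\alpha_C/\alpha_{C_i}$ is computable from $G$, and Lemma~\ref{atoc} converts it into $f_A(y_i)/f_A(x)$. Property $W$ supplies $t+1$ such ratios, and a homogeneous binary form of degree $t$ is determined up to scalar by its value ratios at $t+1$ distinct directions; its linear factors are then the co-secants through $A$, with no separate non-degeneracy argument needed. To repair your proof you would have to rebuild this cross-$A$ compatibility (the $\alpha_C$'s and the lemma of tangents), at which point you would essentially be reproducing the paper's argument.
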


Theorem~\ref{wtwo} has the following corollary.

\begin{corollary} \label{ctwo}
If $\mathrm{M}_{ n}$ has rank one less than full row rank and $G$ can be extended to an arc $S$ of size $q+2k+n-1-|G|$ then the co-secants to $S$ containing only points of $G$ are determined by $G$.
\end{corollary}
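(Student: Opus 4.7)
The plan is to deduce Corollary~\ref{ctwo} from Theorem~\ref{wtwo} by checking that, under the rank hypothesis together with the existence of the extension $S$, the pair $(G,n)$ automatically has Property~$W$.

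Let $V$ denote the column space of $\mathrm{M}_n$, a subspace of $\mathbb{F}_q^R$ where $R$ is the set of $(k-1)$-subsets of $G$. The assumption that $\mathrm{M}_n$ has rank one less than full row rank means $\dim V = |R|-1$, so $V$ is the kernel of a single nonzero linear functional, i.e.\ there exists $\lambda \in \mathbb{F}_q^R$ such that
$$
V = \Bigl\{x \in \mathbb{F}_q^R \, : \, \sum_{C \in R} \lambda_C x_C = 0 \Bigr\}.
$$
Because $G$ extends to an arc of size $q+2k+n-1-|G|$, Theorem~\ref{wone} tells us that $V$ contains no vector of weight one. Hence for every $C_0 \in R$ the standard basis vector $e_{C_0}$ is not in $V$, which forces $\lambda_{C_0} \neq 0$. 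So the coefficients $\lambda_C$ are all nonzero.

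Now given any two distinct $(k-1)$-subsets $C, C' \in R$, the vector
$$
\lambda_{C'}\,e_C \;-\; \lambda_C\,e_{C'}
$$
has weight two, is supported on $\{C,C'\}$, and visibly lies in $V$. Therefore, for each $(k-2)$-subset $A$ of $G$ we may pick any $(k-1)$-subset $C$ of $G$ containing $A$ and let $C_1,\ldots,C_{|G|-n-k+1}$ be any choice of other $(k-1)$-subsets containing $A$ (there are $|G|-k+1$ such subsets available, which is at least $|G|-n-k+1$ as $n\geqslant 0$); the construction above produces, for each $i$, a weight-two vector in $V$ with nonzero coordinates only at $C$ and $C_i$. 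This is precisely Property~$W$.

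With Property~$W$ established, Theorem~\ref{wtwo} immediately yields that the co-secants to $S$ containing only points of $G$ are determined by $G$. The only step that requires any thought is the extraction of nonzero coordinates of $\lambda$ from the absence of weight-one vectors; everything else is routine once $V$ is recognized as a hyperplane in $\mathbb{F}_q^R$.
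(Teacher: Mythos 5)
Your proof is correct and follows essentially the same route as the paper: both deduce from Theorem~\ref{wone} that the column space contains no weight-one vector, combine this with the codimension-one rank hypothesis to show that every pair of $(k-1)$-subsets supports a weight-two vector in the column space (hence Property~$W$), and then invoke Theorem~\ref{wtwo}. Your presentation of the column space as the kernel of a single linear functional with all coordinates nonzero is simply a cleaner rendering of the step the paper carries out via column operations.
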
 

\begin{proof}
If $G$ can be extended to an arc $S$ of size $q+2k+n-1-|G|$ then by Theorem~\ref{wone} there are no vectors of weight one in the column space of $\mathrm{M}_{ n}$. Since the rank of $\mathrm{M}_{ n}$ is one less than full row rank, using column operations one can transform the matrix $\mathrm{M}_{ n}$, into a matrix that has the identity matrix (of size one less than the number of rows) in the top left hand corner. By taking one of these columns, or a linear combination of two of them, we have that every vector of weight two is in the column space of $\mathrm{M}_{ n}$, so Property $W$ holds.
\end{proof}

There is an inclusion-reversing duality between the $r$-dimensional subspaces of ${\mathrm V}_k({\mathbb F}_q)$ and the $(k-r)$-dimensional subspaces. Under this duality the set of co-secants to an arc $S$ of size $q+k-1-t$ is a set of ${|S| \choose k-2}t$ vectors. In \cite{Segre1967} and \cite{BBT1990} respectively, Segre (the case $k$ is three) and Blokhuis, Bruen and Thas prove that this set of vectors is contained in an algebraic hypersurface $\phi_S$ of degree $t$ if $q$ is even and of degree $2t$ if $q$ is odd. Moreover, $\phi_S$ can be constructed from a sub-arc $G$ of $S$ of size $k+t-1$ if $q$ is even and $k+2t-1$ if $q$ is odd, if one knows all the co-secants to $S$ containing only points of $G$, see Section~\ref{appendix}. Therefore, Theorem~\ref{wtwo} can be strengthened to the following theorem.

\begin{theorem} \label{wtwo2}
If $(G,n)$ has Property $W$ and $G$ can be extended to an arc $S$ of size $q+2k+n-1-|G|$ then $\phi_S$ is determined by $G$, provided that $2n \geqslant |G|-k-1$ in the case that $q$ is odd.
\end{theorem}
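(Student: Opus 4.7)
The plan is to reduce the statement to Theorem~\ref{wtwo} combined with the Segre / Blokhuis--Bruen--Thas reconstruction of $\phi_S$ that is recalled in Section~\ref{appendix}. I would first set $t:=|G|-k-n$, so that the hypothesised extension $S$ has size
$$
|S|=q+2k+n-1-|G|=q+k-1-t,
$$
matching the standard parametrisation in which $\phi_S$ is a hypersurface (in the dual space) of degree $t$ for $q$ even and of degree $2t$ for $q$ odd, containing the ${|S| \choose k-2}t$ vectors dual to the co-secants of $S$.

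Next I would apply Theorem~\ref{wtwo}: Property~$W$ for $(G,n)$ together with the extendibility of $G$ to an arc $S$ of the specified size implies that the set of co-secants to $S$ that involve only vectors of $G$ is already determined by the data of $G$. So the only thing left to do is to show that this distinguished subset of co-secants determines $\phi_S$ as a whole.

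For that I would invoke the construction recalled in Section~\ref{appendix}: from the results of Segre \cite{Segre1967} and Blokhuis--Bruen--Thas \cite{BBT1990}, $\phi_S$ can be written down explicitly from a sub-arc $G$ of $S$ once one knows all the co-secants to $S$ that only contain vectors of $G$, provided $|G|\geqslant k+t-1$ when $q$ is even and $|G|\geqslant k+2t-1$ when $q$ is odd. Substituting $t=|G|-k-n$ and simplifying, the even-characteristic condition reduces to $n\geqslant -1$ (automatic), while the odd-characteristic condition becomes $|G|\geqslant 2|G|-k-2n-1$, i.e.\ $2n\geqslant |G|-k-1$, which is precisely the hypothesis of the theorem.

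I expect the main (and essentially the only) obstacle to be making sure that the appendix statement is invoked in exactly the form ``$\phi_S$ is recovered from the co-secants of $S$ through subsets of $G$ alone,'' with no extra data smuggled in; this is how Segre and Blokhuis--Bruen--Thas arrange the argument, but it is the one place where I would have to cross-reference the appendix rather than quote a numbered theorem from earlier in the paper. Everything else is a direct composition of Theorem~\ref{wtwo} with the reconstruction of $\phi_S$, plus the elementary substitution that matches the size bound $k+2t-1$ to the stated inequality $2n\geqslant |G|-k-1$.
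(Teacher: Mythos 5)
Your proposal is correct and follows essentially the same route as the paper: apply Theorem~\ref{wtwo} to determine the co-secants to $S$ through $(k-2)$-subsets of $G$, then invoke the reconstruction of $\phi_S$ from Section~\ref{appendix} (Blokhuis--Bruen--Thas), with the substitution $t=|G|-k-n$ turning the size requirement $|G|\geqslant k+2t-1$ into exactly the hypothesis $2n\geqslant |G|-k-1$ for $q$ odd. No gaps.
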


Theorem~\ref{wtwo2} has the following corollary, the proof of which is identical to the proof of Corollary~\ref{ctwo}.

\begin{corollary} \label{ctwo2}
If $\mathrm{M}_{ n}$ has rank one less than full row rank and $G$ can be extended to an arc $S$ of size $q+2k+n-1-|G|$ then $\phi_S$ is determined by $G$, provided that $2n \geqslant |G|-k-1$ in the case that $q$ is odd.
\end{corollary}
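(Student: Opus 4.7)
The plan is to reduce Corollary~\ref{ctwo2} to Theorem~\ref{wtwo2} by establishing Property~$W$ for $(G,n)$ from the hypothesis on the rank of $\mathrm{M}_n$, mirroring the argument used for Corollary~\ref{ctwo}. The only new ingredient is replacing Theorem~\ref{wtwo} with its strengthening Theorem~\ref{wtwo2}; the extra hypothesis $2n \geqslant |G|-k-1$ in odd characteristic is exactly the one required for that strengthening and is inherited unchanged.

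The first step is to invoke Theorem~\ref{wone} for the contrapositive direction. Since $G$ extends to an arc $S$ of size $q+2k+n-1-|G|$, the column space of $\mathrm{M}_n$ cannot contain any vector of weight one. Let $r$ denote the number of rows of $\mathrm{M}_n$, indexed by the $(k-1)$-subsets $C$ of $G$. By assumption the column space is an $(r-1)$-dimensional subspace of $\mathbb{F}_q^r$, hence the kernel of a single nonzero linear functional $\lambda = (\lambda_C)_{C}$. The condition that no standard basis vector $e_C$ lies in the column space translates exactly into $\lambda_C \neq 0$ for every $C$.

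With all coordinates of $\lambda$ nonzero, for any two distinct $(k-1)$-subsets $C, C'$ of $G$ the vector $\lambda_{C'} e_C - \lambda_C e_{C'}$ is annihilated by $\lambda$ and therefore lies in the column space of $\mathrm{M}_n$. Thus every weight-two vector (up to scalar, with arbitrary two-element support) belongs to the column space. In particular, fixing any $(k-2)$-subset $A$ of $G$ and choosing any $(k-1)$-subset $C \supset A$, the weight-two vectors supported on $\{C, C_i\}$ for the remaining $|G|-k-n+1$ sets $C_i \supset A$ all lie in the column space. This is exactly Property~$W$ for $(G,n)$.

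Once Property~$W$ is verified, Theorem~\ref{wtwo2} applies directly and yields that the hypersurface $\phi_S$ is determined by $G$, under the provision $2n \geqslant |G|-k-1$ in the odd characteristic case. The main obstacle — really the only nontrivial point — is the linear-algebra observation that a corank-one subspace of $\mathbb{F}_q^r$ avoiding all weight-one vectors automatically contains all weight-two vectors; this is the sole content beyond quoting Theorems~\ref{wone} and~\ref{wtwo2}, and it is what makes Corollary~\ref{ctwo2} a genuine strengthening replacing the hypothesis ``Property~$W$'' by the easily checkable rank condition.
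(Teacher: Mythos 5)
Your proposal is correct and follows essentially the same route as the paper: the paper proves Corollary~\ref{ctwo2} by noting its proof is identical to that of Corollary~\ref{ctwo}, i.e.\ use Theorem~\ref{wone} to exclude weight-one vectors from the column space, deduce Property~$W$ from the corank-one hypothesis, and then invoke Theorem~\ref{wtwo2}. Your phrasing of the linear-algebra step via the annihilating functional $\lambda$ is just a cleaner restatement of the paper's column-operations argument.
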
 

We will prove in Section~\ref{appendix} that $G$ never satisfies the hypotheses of Theorem~\ref{wone}, Theorem~\ref{wtwo} and Theorem~\ref{wtwo2} when $q$ is even, see Theorem~\ref{forgetqeven}, apart from in the trivial case that $|G|=k+n$ in which case the hypotheses of Theorem~\ref{wtwo} and Theorem~\ref{wtwo2} are satisfied.
In \cite{Segre1967} and \cite{BBT1990} respectively, Segre (the case $k$ is three) and Blokhuis, Bruen and Thas prove that if $q$ is odd and $G$ has size at least $k-1+(2q/3)$  or if $q$ is even and $G$ has size at least $k+(q/2)$ then $G$ is uniquely completable. Thus, Theorem~\ref{wtwo} and Theorem~\ref{wtwo2} are of interest only when $q$ is odd and $G$ is smaller than $k-1+(2q/3)$.

To illustrate the applicability of Theorem~\ref{wtwo} and Theorem~\ref{wtwo2} we first consider some examples. It is perhaps surprising that $G$ can be relatively small and still have Property $W$.
Let
$$
G=\{ [ \epsilon^0, 0 ,0 ], [ 0, \epsilon^0, 0 ], 
[ 0, 0, \epsilon^0 ], [ \epsilon^0, \epsilon^2, \epsilon^3 ], 
[ \epsilon^0, \epsilon^6, \epsilon^4 ], 
[ \epsilon^0, \epsilon^{9}, \epsilon^{9} ], 
[ \epsilon^0, \epsilon^4, \epsilon^6 ], [ \epsilon^0, \epsilon^{11}, \epsilon^5 ], 
[ \epsilon^0, \epsilon^0, \epsilon^8 ]\},
$$
where $\epsilon$ is a primitive element of ${\mathbb F}_{13}$. Then $(G,3)$ has Property $W$, so Theorem~\ref{wtwo} implies that if it can be extended to an arc $S$ of $\mathrm{V}_3({\mathbb F}_{13})$ of size $12$ (and it can) then $\phi_S$ is determined by $G$. 

Let
$$
G=\{
 [ \epsilon^0, 0, 0], [ 0, \epsilon^0, 0 ], 
 [ 0, 0, \epsilon^0 ], [ \epsilon^0, \epsilon^{10}, \epsilon^2 ], 
 [ \epsilon^0, \epsilon^2, \epsilon^{11} ], [ \epsilon^0, \epsilon^9, \epsilon^4 ] \},
$$
where $\epsilon$ is a primitive element of ${\mathbb F}_{13}$. Then $(G,2)$ has Property $W$, so Theorem~\ref{wtwo} implies that if it can be extended to an arc $S$ of $\mathrm{V}_3({\mathbb F}_{13})$ of size $14$ (and it can) then $\phi_S$ is determined by $G$. In this case $\phi_S$ also determines $S$. Note that by Segre's theorem \cite{Segre1955a}, we know that $S$ is a conic. The small arc $G$ also completes to arcs of size $9$, $10$ and $12$, so it is not uniquely completable. 

In \cite{Ball2012}, it is shown that if $k\leqslant p$, where $p$ is the prime such that $q$ is a power of $p$, then there are no arcs of size $q+2$. This follows from Theorem~\ref{wone} by considering the $p$-rank formula for inclusion matrices from \cite{Frankl1990} or \cite{Wilson1990}. If $k\leqslant p$ then for any arc $G$ of size $2k-3$ the matrix $\mathrm{M}_{ 0}$ has full row-rank and therefore a vector of weight one in its column space. In \cite{BdB2012}, it is shown that if $k\leqslant 2p-2$, where $q$ is a non-prime prime power, then there are no arcs of size $q+2$. Indeed it can be shown that, see \cite{Chowdhury2015}, for an arc $G$ of size $2k-2$ the matrix $\mathrm{M}_{ 1}$ has a vector of weight one in its column space, so Theorem~\ref{wone} implies that if $k\leqslant 2p-2$ then there are no arcs of size $q+2$. Computational evidence for small $k$ and small $p$ suggests that the following is true.

\begin{conjecture} \label{weakmds}
If $k \leqslant p+n(p-2)$ and $G$ has size $2k-3+n$ then the matrix $\mathrm{M}_{ n}$ has a vector of weight one in its column space. 
\end{conjecture}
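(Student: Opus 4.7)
The natural approach is induction on $n$, anchored by the cases $n=0$ and $n=1$ already verified in \cite{Ball2012}, \cite{BdB2012}, and \cite{Chowdhury2015} via Wilson-type $p$-rank formulas for inclusion matrices. For the inductive step, I would fix $u_0 \in G$, set $G' = G \setminus \{u_0\}$, and partition the columns of $\mathrm{M}_n(G)$ according to whether $u_0 \in E$ and the rows according to whether $u_0 \in C$. For a column $(A,E)$ with $u_0 \notin E$, the factor $u_0 \in G \setminus E$ appears in the product, and the entry at any row $C$ containing $u_0$ vanishes since $\det(u_0,C)$ then has $u_0$ as a repeated row. The columns with $u_0 \notin E$ therefore form a block of the shape
$$
\begin{pmatrix} D \cdot \mathrm{M}_{n-1}(G') \\ 0 \end{pmatrix},
$$
where $D$ is the diagonal matrix on rows $C \subset G'$ with entries $\prod_{v \in G \setminus (E \cup\{u_0\})}$ (factored out) times $\det(u_0, C)$, all of which are nonzero because $G$ is an arc. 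Here $|G'| = 2k-3+(n-1)$, so the hypothesis of the conjecture for the smaller parameter would give a weight-one vector in the column space of $\mathrm{M}_{n-1}(G')$; extending by zero on rows containing $u_0$ yields the desired weight-one vector in the column space of $\mathrm{M}_n(G)$.

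The obstacle is that the inductive hypothesis for $G'$ requires $k \leqslant p + (n-1)(p-2)$, which is strictly stronger than the hypothesis $k \leqslant p + n(p-2)$ that we are trying to establish. Thus the reduction only handles the smaller range, and the ``annulus'' $p + (n-1)(p-2) < k \leqslant p + n(p-2)$ must be treated by other means. In this regime one is forced to use the columns of $\mathrm{M}_n(G)$ with $u_0 \in E$ as well, and to vary $u_0$ over $G$ so as to exploit several such block decompositions simultaneously.

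To handle this remaining range, I would try to generalise the Frankl–Wilson $p$-rank argument to the ``weighted inclusion matrix'' $\mathrm{M}_n$ itself. The plan is to Laplace-expand each factor $\det(u,C)$ in $\prod_{u \in G \setminus E} \det(u,C)$ along the row corresponding to $u$, rewriting the $(C,(A,E))$-entry as a polynomial in the coordinates of the vectors $u \in G \setminus E$ whose coefficients are, up to sign, products of $(k-1) \times (k-1)$ minors of $C$ times multinomial coefficients. The column space of $\mathrm{M}_n$ then contains all the columns of an auxiliary $0/1$-weighted inclusion matrix obtained by isolating a chosen multi-degree in these coordinates, and the classical $p$-rank results of \cite{Frankl1990} and \cite{Wilson1990} apply. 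The hypothesis $k \leqslant p + n(p-2)$ should be the sharp condition under which the relevant multinomial coefficient is nonzero modulo $p$, verifiable via Lucas's theorem on the base-$p$ digits of $n(p-1)$ and $k-1$.

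The step I expect to be the main obstacle is the combinatorial bookkeeping needed to single out a multi-degree whose associated auxiliary matrix has full row rank under exactly the hypothesis $k \leqslant p + n(p-2)$; the base cases $n=0,1$ and the computational evidence cited after the conjecture strongly suggest the right multi-degree is the balanced one (each $u \in G \setminus E$ appearing to the same degree $k-1$), but proving that Lucas's theorem gives non-vanishing precisely on the stated range, and that no lower-order terms conspire to kill the relevant weight-one combination, is the delicate point.
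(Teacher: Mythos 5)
The statement you are proving is stated in the paper as a \emph{conjecture}: the author offers only computational evidence for small $k$ and $p$, and no proof. Your proposal does not close that gap. The first half is fine as far as it goes: the block decomposition of $\mathrm{M}_n(G)$ over the columns with $u_0\notin E$ is correct (those columns vanish on every row $C\ni u_0$ because $\det(u_0,C)$ has a repeated row, and on the remaining rows they equal $\det(u_0,C)$ times the corresponding entry of $\mathrm{M}_{n-1}(G\setminus\{u_0\})$, with $\det(u_0,C)\neq 0$ by the arc property), so a weight-one vector for $\mathrm{M}_{n-1}(G')$ lifts to one for $\mathrm{M}_n(G)$. But, as you yourself note, this only reaches $k\leqslant p+(n-1)(p-2)$, and the entire content of the conjecture is the extra range $p+(n-1)(p-2)<k\leqslant p+n(p-2)$. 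Already for $n=1$ that range is not obtained by any such reduction; it required the full lemma-of-tangents machinery of \cite{BdB2012} and \cite{Chowdhury2015}.

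For the remaining range your plan has a concrete flaw beyond being a programme rather than an argument. Each entry $\prod_{u\in G\setminus E}\det(u,C)$ is a fixed field element, evaluated at the fixed vectors of the arc; it is homogeneous of degree exactly one in each $u\in G\setminus E$, so there is no ``balanced multi-degree $k-1$'' to isolate, and no multinomial coefficients arise from expanding $\det(u,C)$ along the row $u$ (the coefficient of $u_i$ is just a signed $(k-1)\times(k-1)$ minor of $C$). More importantly, extracting a homogeneous component of the entries is not a linear operation on the columns of $\mathrm{M}_n$: the columns of your auxiliary $0/1$ inclusion matrix are not linear combinations of columns of $\mathrm{M}_n$, because the vectors $u$ are not free variables you can specialise and interpolate over. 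The reason the unweighted Frankl--Wilson rank formula applies for a single fixed $E$ (as in the paper's discussion of $\mathrm{M}_{|G|-k}$) is that there the weight depends only on the row $C$ and can be divided out; for general $n$ one must combine columns coming from different sets $E$, the weights genuinely interact, and no reduction to an unweighted inclusion matrix is known. That is precisely why the statement remains a conjecture.
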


If Conjecture~\ref{weakmds} is true then Theorem~\ref{wone} would imply that there are no arcs of size $q+2$ for $k \leqslant (pq-2q+6p-10)/(2p-3)$. This would verify the MDS conjecture  for these $k$. For more on the MDS conjecture, see for example \cite{Ball2012}, \cite{HS2001}, \cite{MS1977} or \cite{Vardy2006}. A version of Conjecture~\ref{weakmds} in which $\mathrm{M}_{ n}$ is conjectured to have full row-rank appears in \cite{Chowdhury2015}.
Since there are examples of arcs $G$ of size $2k-3+n$ with $k>p+n(p-2)$ for which $\mathrm{M}_{ n}$ does not have vectors of weight one in its column space, this would appear to put a limit on these methods for verifying the MDS conjecture in its entirety. To verify the MDS conjecture one must show that there are no arcs of size $q+2$ for $4 \leqslant k \leqslant (q+2)/2$, so one would fall short.  

Theorem~\ref{wone}, Theorem~\ref{wtwo} and Theorem~\ref{wtwo2} may be of some use in classifying or at least constructing large arcs computationally. See \cite{Coolsaet2015}, \cite{CS2011} and \cite{Keri2006} for recent computational results regarding arcs. To classify arcs of size $q+k-r$ one would need to classify arcs of size $k+r$. If one could classify arcs of size $k+r$ then one could quickly check for each arc to see if $\mathrm{M}_1$ has a vector of weight one in the column space, for each projectively distinct arc $G$. In the positive case, this would then rule out the possibility that $G$ can be extended to an arc of size $q+k-r$. In the negative case, one can then extend the arc to an arc $H$ of size $k+r+1$ and check to see if $\mathrm{M}_2$ (calculated using $H$) has a vector of weight one in the column space. In the positive case, this would then rule out the possibility that $H$ can be extended to a arc of size $q+k-r$. This should dramatically reduce the set of possible sub-arcs of arcs of size $q+k-r$. Those small arcs which cannot be ruled out as possibly extending to a large arc of course may well extend. In this case Theorem~\ref{wtwo} and Theorem~\ref{wtwo2} come into play. If $(G,1)$ or $(H,2)$ etc., has property $W$ then Theorem~\ref{wtwo} and possibly Theorem~\ref{wtwo2} applies. Knowing $G$ and the co-secants to $S$ containing the points of $G$ may well be enough to determine $S$ and even if it doesn't, it will certainly drastically reduce the possible vectors which might extend $G$.

For example, consider the following arc of size $11$ of $\mathrm{V}_6({\mathbb F}_{81})$
$$
G=\{ [ \rho^0, 0, 0, 0, 0, 0 ], 
  [ 0, \rho^0, 0, 0, 0, 0 ], 
  [ 0, 0, \rho^0, 0, 0, 0 ], 
  [ 0, 0, 0, \rho^0, 0, 0 ], 
  [ 0, 0, 0, 0, \rho^0, 0 ], 
  $$
  $$
  [ 0, 0, 0, 0, 0, \rho^0 ], 
  [ \rho^0, \rho^0, \rho^0, \rho^0, \rho^0, \rho^0 ], 
  [ \rho^0, \rho^{58}, \rho^{41}, \rho^{14}, \rho^{54}, \rho^{48} ],
  $$
  $$ 
  [ \rho^0, \rho^{25}, \rho^{55}, \rho^{43}, \rho^{74}, \rho^{58} ], 
  [ \rho^0, \rho, \rho^{66}, \rho^{22}, \rho^{42}, \rho^{65} ], 
  [ \rho^0, \rho^{76}, \rho^{44}, \rho^{21}, \rho^{43}, \rho^5 ] \},
$$
where $\rho$ is a primitive element of ${\mathbb F}_{81}$. The matrix $\mathrm{M}_1$ has rank $461$ and full row rank would be $462$. It does not have a vector of weight one in its column space, so Theorem~\ref{wone} does not apply. However, Corollary~\ref{ctwo} does apply. Suppose that $G$ extends to an arc $S$ of size $82$. From a vector which is a basis of the null space of the row space of $\mathrm{M}_1$ we can calculate $f_A$ (see the next section) for every subset $A$ of $G$ of size $4$. These polynomials of degree $4$ must all be fully reducible into linear factors. Moreover, for each linear form $\alpha$, which is a factor of $f_A$ for some $A$, the hyperplane $\ker \alpha$ contains only the points $A$ of $S$ and no other points of $S$. This severely restricts how one can extend $G$.

Even if the classification of arcs of size $k+r$ for a certain $q$ is infeasible computationally, one may be able to construct new examples of size $q+k-r$. By identifying the small arcs which can appear as sub-arcs of an arc of size $q+k-r$ one can apply Theorem~\ref{wtwo} and Theorem~\ref{wtwo2}, as explained in the previous paragraph.

Theorem~\ref{wone}, Conjecture~\ref{weakmds}, and their applications to computationally classifying and constructing large arcs are joint work with Ameera Chowdhury.  The results in Section 2 and Section 3, which are simplifications of previous results from \cite{Ball2012}, \cite{Ball2015}, and \cite{BdB2012}, is also joint work with Ameera Chowdhury. She has written a separate exposition of these results in \cite{Chowdhury2015}.

\section{The functions $f_A$}

If $C=\{p_1,\ldots,p_{k-1} \}$ is an ordered set of $k-1$ vectors then we write
$$
\det(u,C)=\det(u,p_1,\ldots,p_{k-1}),
$$
and if $A=\{p_1,\ldots,p_{k-2} \}$ is an ordered set of $k-2$ vectors then we write
$$
\det(u,v,A)=\det(u,v,p_1,\ldots,p_{k-2}),
$$
where we evaluate the determinant with respect to a fixed canonical basis. This defines a bilinear form on $\mathrm{V}_k({\mathbb F}_q) \times \mathrm{V}_k({\mathbb F}_q)$ by
$$
d_A(u,v)=\det (u,v,A).
$$

Let $S$ be an arc of $\mathrm{V}_k({\mathbb F}_q)$. We order the elements of $S$ arbitrarily and maintain this order throughout, unless otherwise stated. Let $A$ and $B$ be subsets of the ordered set $S$. If we write $A,B$ in place of $A \cup B$ then this means order the elements of $A$ first and then order the elements of $B$.


Let $A$ be a subset of $S$ of size $k-2$.

\begin{lemma} \label{minusone}
The bilinear form $d_A$ is alternating and in particular $d_{A}(u,v)=-d_{A}(v,u)$.
\end{lemma}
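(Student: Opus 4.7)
The plan is to reduce the lemma directly to standard properties of the determinant, viewed as a function of the rows of a $k \times k$ matrix. Since $d_A(u,v) = \det(u,v,p_1,\ldots,p_{k-2})$ when $A=\{p_1,\ldots,p_{k-2}\}$, the map $d_A$ is literally the determinant with the last $k-2$ rows fixed and the first two rows varying.

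First I would note that $d_A$ is bilinear: this is immediate from the multilinearity of $\det$ in its rows, applied in the first two row-slots (the remaining rows being fixed entries of $A$). Next, to show $d_A$ is alternating, I would compute $d_A(u,u) = \det(u,u,p_1,\ldots,p_{k-2})$, which is the determinant of a matrix with two identical rows and hence equals $0$; this is valid in any characteristic and is the definitional content of ``alternating.''

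Finally, to obtain the sign identity $d_A(u,v) = -d_A(v,u)$, I would expand $0 = d_A(u+v, u+v)$ using bilinearity:
\[
0 = d_A(u,u) + d_A(u,v) + d_A(v,u) + d_A(v,v) = d_A(u,v)+d_A(v,u),
\]
using that $d_A(u,u)=d_A(v,v)=0$ by the previous step. This derivation is characteristic-free and avoids invoking $2 \neq 0$, which matters since the paper works over arbitrary ${\mathbb F}_q$. (Alternatively, the sign identity follows directly from the fact that swapping two rows of a determinant negates it, a property that is itself a consequence of the alternating property via the same expansion.)

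There is no real obstacle here; the lemma is essentially a repackaging of the alternating-multilinear nature of the determinant. The only subtlety worth flagging in the write-up is the characteristic-free route to the sign identity, since later results in the paper involve hypotheses on parity of $q$.
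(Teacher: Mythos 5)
Your proof is correct and takes essentially the same approach as the paper: both reduce the lemma to standard properties of the determinant as an alternating multilinear function of its rows, with $d_A(u,u)=0$ coming from repeated rows. The only cosmetic difference is that the paper obtains $d_A(u,v)=-d_A(v,u)$ directly from the row-swap property of the determinant, whereas you derive it by polarizing $d_A(u+v,u+v)=0$; both are valid and characteristic-free.
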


\begin{proof}
By the definition of $d_A$ we have $d_A(u,u)=\det(u,u,A)=0$ and
$$
d_{A}(u,v)=\det(u,v,A)=-\det(v,u,A)=-d_{A}(v,u).
$$
\end{proof}

Let $t=q+k-1-|S|$.

\begin{lemma}
There are $t$ hyperplanes which contain the vectors of $A$ and no other vectors of $S$.
\end{lemma}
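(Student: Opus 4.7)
The plan is a straightforward counting argument in the pencil of hyperplanes through the $(k-2)$-dimensional subspace $\langle A\rangle$. Since $A$ is a $(k-2)$-subset of the arc $S$, the arc property guarantees that the vectors of $A$ are linearly independent and span a $(k-2)$-dimensional subspace of $\mathrm{V}_k({\mathbb F}_q)$. Hyperplanes containing this subspace are in bijection with the points of the projective line obtained as the quotient $\mathrm{V}_k({\mathbb F}_q)/\langle A\rangle$, so there are exactly $q+1$ hyperplanes containing $A$.

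Next I would distribute the $|S|-(k-2)$ remaining vectors of $S\setminus A$ among these $q+1$ hyperplanes. For any $u\in S\setminus A$, the arc property implies $u\notin\langle A\rangle$, so $\langle A,u\rangle$ is a hyperplane and it is the unique hyperplane of the pencil containing $u$. Conversely, the arc property also implies that a hyperplane (a $(k-1)$-dimensional subspace) can contain at most $k-1$ vectors of $S$, for otherwise some $k$-subset of $S$ would be linearly dependent. Hence a hyperplane through $A$ contains at most one vector of $S\setminus A$.

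Combining these two observations, the assignment $u\mapsto \langle A,u\rangle$ is an injection from $S\setminus A$ into the set of $q+1$ hyperplanes through $A$. The number of hyperplanes through $A$ containing no point of $S\setminus A$ is therefore
\[
(q+1)-(|S|-(k-2))=q+k-1-|S|=t,
\]
which is the desired count. There is no serious obstacle; the only thing to keep straight is that the arc hypothesis is invoked twice, once to make $A$ genuinely $(k-2)$-dimensional (so the pencil really has $q+1$ members) and once to force each hyperplane of the pencil to meet $S\setminus A$ in at most one vector.
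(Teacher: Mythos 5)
Your proposal is correct and follows essentially the same argument as the paper: count the $q+1$ hyperplanes through $\langle A\rangle$, note that the arc condition forces each to contain at most one vector of $S\setminus A$, and subtract $|S|-(k-2)$ from $q+1$. Your write-up merely makes explicit the injectivity of $u\mapsto\langle A,u\rangle$, which the paper leaves implicit.
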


\begin{proof}
There are $q+1$ hyperplanes containing the subspace spanned by the $k-2$ vectors of $A$. Since $S$ is an arc, a hyperplane can contain at most $k-1$ vectors of $S$. Therefore, there are $|S|-|A|$ of them which contain one more vector of $S$ and so $q+1-|S|+k-2$ of them which contain no more vectors of $S$.
\end{proof}

Let $\alpha_1,\ldots,\alpha_t$ be pairwise linearly independent forms with the property that $\ker \alpha_i \cap S=A$. Define
$$
f_A(x)=\prod_{i=1}^t \alpha_i(x),
$$
a function from $\mathrm{V}_k({\mathbb F}_q)$ to ${\mathbb F}_q$.

\begin{lemma} \label{interp}
If $ E$ is a subset of $S$ of size $t+k-1$ containing $A$ then
$$
f_A(x)=\sum_{e \in E \setminus A} f_A(e) \prod_{u \in E \setminus (A\cup \{e\})} \frac{d_{A}(u,x)}{d_{A}(u,e)}.
$$
\end{lemma}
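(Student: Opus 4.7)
The plan is to carry out a Lagrange-interpolation argument in a two-dimensional quotient. Both sides of the identity, viewed as functions of $x$, are homogeneous of degree $t$: the left-hand side $f_A$ is a product of the $t$ linear forms $\alpha_i$, while each summand on the right is a scalar multiple of a product of $t$ linear forms $d_A(u,\cdot)$, one for each $u \in E\setminus(A\cup\{e\})$.

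The key observation is that every one of these linear forms vanishes on $\mathrm{span}(A)$: the $\alpha_i$ do so by construction, and $d_A(u,y)=\det(u,y,p_1,\ldots,p_{k-2})$ vanishes whenever $y$ is a linear combination of $p_1,\ldots,p_{k-2}$. Hence both sides descend along the quotient map $\pi : \mathrm{V}_k({\mathbb F}_q) \to \mathrm{V}_k({\mathbb F}_q)/\mathrm{span}(A)$, which is two-dimensional because the $k-2$ vectors of $A$ are linearly independent. Writing $\tilde u=\pi(u)$, the form $d_A(u,x)$ becomes a fixed nonzero scalar multiple of the $2\times 2$ determinant $[\tilde u,\tilde x]$, and $f_A$ becomes a homogeneous polynomial $\tilde f_A$ of degree $t$ on the quotient. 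Since $S$ is an arc, the vectors $\tilde u$ for $u\in S\setminus A$ are pairwise linearly independent, so in particular every denominator $d_A(u,e)$ appearing in the formula is nonzero.

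Now $|E\setminus A|=(t+k-1)-(k-2)=t+1$, so the right-hand side is a sum of exactly $t+1$ Lagrange-type terms
$$
L_e(\tilde x)=\prod_{u\in E\setminus(A\cup\{e\})}\frac{[\tilde u,\tilde x]}{[\tilde u,\tilde e]}.
$$
A direct check shows $L_e(\tilde e)=1$, and $L_e(\tilde{e'})=0$ for $e'\in(E\setminus A)\setminus\{e\}$, because the factor indexed by $u=e'$ is then $[\tilde{e'},\tilde{e'}]=0$. Consequently, if $g$ denotes the right-hand side of the claimed identity, then $\tilde g$ and $\tilde f_A$ are two degree-$t$ homogeneous polynomials on the two-dimensional quotient that agree at each of the $t+1$ projectively distinct points $\tilde e$.

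To finish I would use that a nonzero homogeneous polynomial of degree $t$ on a two-dimensional space factors into at most $t$ linear forms, and so has at most $t$ projective zeros. Applied to $\tilde f_A-\tilde g$, which has $t+1$ projective zeros, this forces $\tilde f_A=\tilde g$, and the identity follows by pulling back along $\pi$. The only real subtlety is verifying that both sides descend to the two-dimensional quotient and that the count $|E\setminus A|=t+1$ exceeds the common degree $t$ by exactly one; once these are in place the interpolation step is the usual two-variable Lagrange argument.
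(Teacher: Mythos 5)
Your proof is correct and takes essentially the same approach as the paper: the paper works in a basis whose last $k-2$ vectors are $A$, which is exactly your passage to the two-dimensional quotient by the span of $A$, and both arguments conclude by Lagrange interpolation of degree-$t$ homogeneous binary forms agreeing at the $t+1$ pairwise independent images of the points of $E\setminus A$. Your write-up merely makes explicit the evaluation check $L_e(\tilde e)=1$, $L_e(\tilde e')=0$ that the paper states in one line.
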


\begin{proof}
With respect to a basis $B$ whose last $k-2$ elements are $A$, $f_A$ is a homogeneous polynomial in two variables, and so is
$$
\sum_{e \in E \setminus A} f_A(e) \prod_{u \in E \setminus (A\cup \{e\})} \frac{d_{A}(u,x)}{d_{A}(u,e)}.
$$

These two polynomials are equal when evaluated at an element of $E \setminus A$. Two homogeneous polynomials of degree $t$ in two variables which are equal at $t+1$ linearly independent points are the same, since their difference is a homogeneous polynomials in two variables of degree at most $t$ and can be zero at at most $t$ linearly independent points. Note that it follows from the arc property that any two points of $E \setminus A$ are linearly independent, even after deleting the $k-2$ coordinates corresponding to the elements of $A$ in the basis.
\end{proof}

\begin{lemma} \label{thesumiszero}
If $ E$ is a subset of $S$ of size $t+k$ containing $A$ then
$$
\sum_{e \in E \setminus A} f_A(e) \prod_{u \in E \setminus (A\cup \{e\})} d_{A}(u,e)^{-1}=0.
$$
\end{lemma}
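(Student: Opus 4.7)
The plan is to deduce the identity from Lemma~\ref{interp} by throwing away one element of $E$ and then evaluating the resulting interpolation formula at the discarded point. Fix any $v \in E \setminus A$ and set $E' = E \setminus \{v\}$, so that $E'$ is a subset of $S$ of size $t+k-1$ containing $A$. Lemma~\ref{interp} applied to $E'$ and evaluated at $x = v$ gives
$$
f_A(v) = \sum_{e \in E' \setminus A} f_A(e) \prod_{u \in E' \setminus (A \cup \{e\})} \frac{d_A(u,v)}{d_A(u,e)}.
$$

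Next I would multiply both sides by $\prod_{u \in E \setminus (A \cup \{v\})} d_A(u,v)^{-1}$. On the left this produces precisely the $e = v$ summand appearing in the statement of the lemma. On the right, for each $e \ne v$, one observes the set identity $E \setminus (A \cup \{v\}) = (E \setminus (A \cup \{e, v\})) \cup \{e\}$, so the factors $\prod_{u \in E \setminus (A \cup \{e, v\})} d_A(u,v)$ from the numerator of the interpolation formula cancel against all but the single factor $d_A(e,v)^{-1}$ of the multiplier. After this cancellation the right-hand side reduces to
$$
\sum_{\substack{e \in E \setminus A \\ e \ne v}} f_A(e)\, d_A(e,v)^{-1} \prod_{u \in E \setminus (A \cup \{e,v\})} d_A(u,e)^{-1}.
$$

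Now I invoke Lemma~\ref{minusone} to rewrite $d_A(e,v)^{-1} = -d_A(v,e)^{-1}$ and reabsorb this factor into the product, recognising that $E \setminus (A \cup \{e\}) = (E \setminus (A \cup \{e,v\})) \cup \{v\}$. The result is that the equation becomes
$$
f_A(v) \prod_{u \in E \setminus (A \cup \{v\})} d_A(u,v)^{-1} = -\sum_{\substack{e \in E \setminus A \\ e \ne v}} f_A(e) \prod_{u \in E \setminus (A \cup \{e\})} d_A(u,e)^{-1},
$$
which on transposing is exactly the desired identity. The main obstacle is purely the set-theoretic bookkeeping of which factors occur in which products, together with the careful placement of the minus sign coming from the alternating form; conceptually the result simply says that evaluating a degree-$t$ homogeneous polynomial in two variables at $t+2$ pairwise linearly independent points produces a linear dependence, and Lemma~\ref{interp} supplies the explicit coefficients.
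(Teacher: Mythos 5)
Your proposal is correct and follows essentially the same route as the paper's own proof: both apply Lemma~\ref{interp} to $E\setminus\{v\}$, evaluate at $v$, divide by $\prod_{u\in E\setminus(A\cup\{v\})}d_A(u,v)$, and use the antisymmetry from Lemma~\ref{minusone} to produce the sign that turns the identity into the vanishing sum. The set-theoretic bookkeeping of which factors cancel is carried out correctly.
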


\begin{proof}
Suppose that $v \in E \setminus A$ and apply Lemma~\ref{interp} with $E$ replaced by $E\setminus \{ v \}$,
$$
f_A(v)=\sum_{e \in E \setminus (A \cup \{v\})} f_A(e) \prod_{u \in E \setminus (A\cup \{v, e\})} \frac{d_{A}(u,v)}{d_{A}(u,e)}.
$$
Dividing by 
$$
\prod_{u \in E \setminus (A\cup \{v\})} d_{A}(u,v)
$$ 
gives
$$
f_A(v)\prod_{u \in E \setminus (A\cup \{v\})} d_{A}(u,v)^{-1}=\sum_{e \in E \setminus (A \cup \{v\})} f_A(e) \frac{d_{A}(v,e)}{d_{A}(e,v)}\prod_{u \in E \setminus (A\cup \{e\})} d_{A}(u,e)^{-1},
$$
and so the lemma follows from Lemma~\ref{minusone}.
\end{proof}

The aim of the following section is to show that we can multiply the equation in Lemma~\ref{thesumiszero} by an element of ${\mathbb F}_q$, dependent on $A$, so that the terms depend only on $C=A \cup \{ e \}$ and not on $A$. This implies that we will get an equation for each $(k-2)$-subset of $E$ whose ``variables'' depend only on the $(k-1)$-subsets of $E$.

\section{A set of equations associated with an arc}

The following lemma is called the co-ordinate free version of Segre's lemma of tangents, proved in \cite{Ball2012} and also \cite[Lemma 7.15]{Ball2015}. In order that this article be self-contained we include a proof.


\begin{lemma} \label{segre1}
Let $D$ be a subset of $S$ of size $k-3$ and let $\{x,y,z \}$ be a subset of $S \setminus D$. Interchanging $x$ and $y$ in
$$
\frac{f_{D \cup \{x \}}(y)f_{D \cup \{z \}}(x)}{f_{D \cup \{x \}}(z)}
$$
changes the sign by $(-1)^{t+1}$.
\end{lemma}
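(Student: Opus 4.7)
Rewrite the claim as $R = (-1)^{t+1}$, where
$$
R := \frac{f_{D\cup\{x\}}(y)\,f_{D\cup\{z\}}(x)\,f_{D\cup\{y\}}(z)}{f_{D\cup\{x\}}(z)\,f_{D\cup\{y\}}(x)\,f_{D\cup\{z\}}(y)}.
$$
The strategy is to first prove an auxiliary identity expressing $f_A(y)/f_A(z)$ (with $A=D\cup\{x\}$) as a product of determinant ratios, via a Wilson's-theorem argument on the pencil of hyperplanes through $\langle A\rangle$; then to take the product of three cyclic variants of this auxiliary identity.

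For the auxiliary identity, observe that the $q+1$ hyperplanes containing the $(k-2)$-flat $\langle A\rangle$ biject with the points of $PG(1,q)$ in the two-dimensional quotient $\mathrm{V}_k/\langle A\rangle$, with $\bar y,\bar z$ taken as basis. Under this bijection, $\langle A,y\rangle$ corresponds to $[1:0]$, $\langle A,z\rangle$ to $[0:1]$, and the remaining $q-1$ hyperplanes have the form $H_c=\langle A,y+cz\rangle$ for a unique $c\in\mathbb{F}_q^*$. By the arc property each $H_c$ contains at most one further arc point, so these $q-1$ hyperplanes split into $t$ tangent hyperplanes at $A$ and $|S|-k$ secants $\langle A,w\rangle$ through a further point $w\in S\setminus(D\cup\{x,y,z\})$; the corresponding parameters together traverse $\mathbb{F}_q^*$ bijectively. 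Any defining form $\alpha$ of $H_c$ satisfies $\alpha(y+cz)=0$, hence $\alpha(y)/\alpha(z)=-c$, so Wilson's theorem gives
$$
\prod_{H}\frac{\alpha_H(y)}{\alpha_H(z)} \;=\; \prod_{c\in\mathbb{F}_q^*}(-c) \;=\; -1,
$$
the product being over the $q-1$ non-special hyperplanes. Splitting this product into tangent and secant factors, where the tangent sub-product equals $f_A(y)/f_A(z)$ and the defining form of the secant $\langle A,w\rangle$ is proportional to $v\mapsto\det(v,w,A)$ (so its ratio equals $d_A(w,y)/d_A(w,z)$), yields
$$
\frac{f_A(y)}{f_A(z)} \;=\; -\prod_{w\in S\setminus(D\cup\{x,y,z\})}\frac{d_A(w,z)}{d_A(w,y)}.
$$

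Applying this identity with $A$ cycled through $D\cup\{x\},D\cup\{z\},D\cup\{y\}$ (and the pair $(y,z)$ cycled accordingly) and multiplying, the three left-hand sides combine to $R$, while on the right the factor of $-1$ is cubed. For each $w\in S\setminus(D\cup\{x,y,z\})$, the resulting per-$w$ determinant ratio
$$
\frac{d_{D\cup\{x\}}(w,z)\,d_{D\cup\{z\}}(w,y)\,d_{D\cup\{y\}}(w,x)}{d_{D\cup\{x\}}(w,y)\,d_{D\cup\{z\}}(w,x)\,d_{D\cup\{y\}}(w,z)}
$$
equals $-1$, since each of the three numerator/denominator pairs (for example $\det(w,z,D,x)$ versus $\det(w,x,D,z)$) differs by a single row swap in the $k\times k$ determinant. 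Hence $R = -(-1)^{|S|-k} = (-1)^{|S|-k+1} = (-1)^{q-t}$ by $|S|+t=q+k-1$, and since $(-1)^q = -1$ in $\mathbb{F}_q$ this equals $(-1)^{t+1}$. The principal obstacle is clean sign book-keeping: verifying the bijective exhaustion of $\mathbb{F}_q^*$ by the tangent and secant parameter values, tracking the sign in $\alpha(y)/\alpha(z)=-c$, and the final field-theoretic identification $(-1)^{q-t}=(-1)^{t+1}$ in $\mathbb{F}_q$ (rather than in $\mathbb{Z}$).
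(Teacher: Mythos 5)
Your proof is correct and is essentially the paper's own argument: both rest on Wilson's theorem applied to the pencil of $q+1$ hyperplanes through the span of $k-2$ points of the arc, split into the two special hyperplanes, the $t$ tangents and the secants through the remaining points of $S$, and both then combine three cyclic instances of the resulting identity. The differences are only presentational: you phrase the pencil computation invariantly via the forms $d_A$ on the quotient rather than in coordinates with respect to the basis $\{x,y,z\}\cup D$, and you collect the sign as $(-1)^{q-t}$ at the end (reconciled with $(-1)^{t+1}$ by parity and characteristic), whereas the paper carries $(-1)^{t+1}$ explicitly through each of its three equations.
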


\begin{proof}
Let $B= \{x,y,z \} \cup  D$. Since $B$ is a subset of $S$ of size $k$, it is a basis of $\mathrm{V}_k({\mathbb F}_q)$.

There are $q+1$ hyperplanes containing $\langle z,D\rangle$, since it is a $(k-2)$-dimensional subspace of $\mathrm{V}_k({\mathbb F}_q)$. We start off by identifying these $q+1$ hyperplanes.

Suppose that $u \in S \setminus B$ and that $(u_1,\ldots,u_k)$ are the coordinates of $u$ with respect to the basis $B$. The hyperplane $\langle u,z,D \rangle$ is
$$
\ker (u_2X_1-u_1X_2),
$$
since $\{z \} \cup D$ is the set of the last $k-2$ vectors of the basis $B$. For each $ u \in S \setminus B$, since $S$ is an arc, we have a distinct hyperplane $\langle u,z,D \rangle$, and so $|S \setminus B|=q-1-t$ of them in all.

Suppose that the function $f_{D \cup \{z\}}$ is
$$
f_{D \cup \{z\}}(u)=\prod_{i=1}^t \alpha_i(u),
$$
where $\ker \alpha_i \cap S=D\cup \{z \}$ and $\alpha_1,\ldots,\alpha_t$ are pairwise linearly independent linear forms.

With respect to the basis $B$, the linear form $\alpha_i(X)$ is
$$
\alpha_i(X)=\alpha_{i1}X_1+\alpha_{i2}X_2,
$$
for some $\alpha_{i1}, \alpha_{i2} \in {\mathbb F}_q$. Since $\ker \alpha_i \supset D \cup \{z \}$, this gives us a further $t$ hyperplanes containing $\langle z,D \rangle$.

The other two hyperplanes are $\ker X_1 = \langle y,z,D \rangle$ and $\ker X_2=\langle x,z,D \rangle$.

The $q-1$ hyperplanes containing $\langle z,D \rangle$, and not containing $x$ or $y$, are
$$
\ker (aX_1+X_2),
$$
where $a \in {\mathbb F}_q \setminus \{0 \}$. Therefore,
$$
\prod_{i=1}^t \frac{\alpha_{i1}}{\alpha_{i2}} \prod_{u \in S \setminus B} \frac{(-u_2)}{u_1}=-1,
$$
since it is the product of all non-zero elements of ${\mathbb F}_q$, which is $-1$.

With respect to the basis $B$, $x$ has coordinates $(1,0,\ldots,0)$, and so
$$
f_{D \cup \{z \} }(x)=f_{D\cup \{z \} } ((1,0,\ldots,0))=\prod_{i=1}^t \alpha_{i1}.
$$
Similarly $$f_{D \cup \{z\} }(y)=\prod_{i=1}^t \alpha_{i2},$$ so the equation above implies
$$
f_{D \cup \{z\} } (y)\prod_{u \in S \setminus B} u_1=(-1)^{t+1}  f_{D \cup \{z\}}(x) \prod_{u \in S \setminus B} u_2.
$$
Repeating the above, switching $y$ and $z$ gives,
$$
f_{D \cup \{y\} } (z)\prod_{u \in S \setminus B} u_1=(-1)^{t+1}  f_{D \cup \{y\}}(x) \prod_{u \in S \setminus B} u_3.
$$
And switching $x$ and $y$ gives,
$$
f_{D \cup \{x\} } (z)\prod_{u \in S \setminus B} u_2=(-1)^{t+1}  f_{D \cup \{x\}}(y) \prod_{u \in S \setminus B} u_3.
$$
Combining these three equations gives,
$$
f_{D \cup \{x \}}(y)f_{D \cup \{y \}}(z)f_{D \cup \{z \}}(x) =(-1)^{t+1}f_{D \cup \{x \}}(z)f_{D \cup \{y \}}(x)f_{D \cup \{z \}}(y),
$$
since 
$$
\prod_{u \in S \setminus B} u_1u_2 u_3 \neq 0.
$$
Thus, we have
$$
\frac{f_{D \cup \{x \}}(y)f_{D \cup \{z \}}(x)}{f_{D \cup \{x \}}(z)}=(-1)^{t+1}\frac{f_{D \cup \{y \}}(x)f_{D \cup \{z \}}(y)}{f_{D \cup \{y \}}(z)}
$$

\end{proof}

Let $F$ be the subset of the first $k-2$ elements of $S$ with respect to the ordering of $S$.

For a subset $A$ of $S$ of size $k-2$, let
$$
\alpha_A=(-1)^{(r+s)(t+1)} \prod_{i=1}^r \frac{f_{D \cup \{z_r,\ldots,z_{i},x_{i-1},\ldots,x_1 \} } (x_{i})}{f_{D \cup \{z_r,\ldots,z_{i+1},x_{i},\ldots,x_1 \} } (z_i)},
$$
where $D=A \cap F$, $A \setminus F=\{x_1,\ldots,x_{r} \}$, $F \setminus A=\{z_1,\ldots,z_r \}$ and $s$ is the number of transpositions required to order $(F \cap A,F\setminus A)$ as $F$.

For a subset $C$ of $S$ of size $k-1$, let
$$
\alpha_C=(-1)^{(r+s)(t+1)} f_{D\cup \{ x_r \ldots,x_1\}}(x_{r+1})\prod_{i=1}^r \frac{f_{D \cup \{z_r,\ldots,z_{i},x_{i-1},\ldots,x_1 \} } (x_{i})}{f_{D \cup \{z_r,\ldots,z_{i+1},x_{i},\ldots,x_1 \} } (z_i)},
$$
where $D=C \cap F$, $C \setminus F=\{x_1,\ldots,x_{r+1} \}$, $F \setminus C=\{z_1,\ldots,z_r \}$ and $s$ is the number of transpositions required to order $(F \cap C,F\setminus C)$ as $F$.

The following is from \cite[Lemma 7.19]{Ball2015}. Again, in order that this article be self-contained, we include a proof.

\begin{lemma} \label{atoc}
For a subset $A$ of $S$ of size $k-2$, and $e \in S \setminus A$,
$$
\alpha_{A \cup \{ e \}}=(-1)^{d(t+1)} \alpha_A f_A(e),
$$
where $d$ is the number of elements of $A$ that come after $e$ in the ordering.
\end{lemma}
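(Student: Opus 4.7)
The plan is to compare the defining products for $\alpha_A$ and $\alpha_{A\cup\{e\}}$ term-by-term. After identifying the correct matching, the discrepancy between the two expressions turns out to be a single factor of $f_A(e)$ together with a sign generated by the reordering needed to bring $e$ into its ``final'' position at the tail of the $x$-list.

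First I would treat the \emph{base case}, in which $e \notin F$ and $e$ follows every element of $A \setminus F$ in the ordering of $S$. Writing $C = A \cup \{e\}$, one has $C \cap F = D$, $s' = s$, $r' = r$, and one may take $x'_i = x_i$ for $i \le r$ and $x'_{r+1} = e$. Substituting into the definition of $\alpha_C$, the product $\prod_{i=1}^{r'}$ reproduces that appearing in $\alpha_A$, and the distinguished additional factor $f_{D \cup \{x_r,\ldots,x_1\}}(e)$ equals $f_A(e)$ since $D \cup (A \setminus F) = A$. Because $d=0$ in this situation, the lemma holds with no sign.

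For the \emph{general case} the strategy is to reduce to the base case by reordering. The key subclaim is that transposing two adjacent entries of the list $x_1,\ldots,x_r$ (respectively, $z_1,\ldots,z_r$) multiplies $\alpha_A$ by $(-1)^{t+1}$. To establish the first version I would isolate the unique pair of consecutive factors in the defining product of $\alpha_A$ that are affected by the swap $x_i \leftrightarrow x_{i+1}$ and apply Lemma~\ref{segre1} to the triple $\{x_i, x_{i+1}, z_i\}$ over the base $D \cup \{z_r,\ldots,z_{i+1}, x_{i-1},\ldots,x_1\}$ of size $k-3$; a short manipulation shows that the ratio of the ``before'' and ``after'' contributions is exactly $(-1)^{t+1}$. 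An analogous computation involving the triple $\{z_i, z_{i+1}, x_{i+1}\}$ handles adjacent transpositions in the $z$-list. Iterating, one can bring any ordering of the roles into any other at the cost of one such sign per transposition. Given then an arbitrary $e$, one moves it past each of the $d$ elements of $A$ currently following it, reducing to the base case and accruing a total sign of $(-1)^{d(t+1)}$. In the subcase $e \in F$ a preliminary manoeuvre is needed: adding $e$ shifts it from the $z$-list into $D$ and simultaneously promotes an element of $A \setminus F$ into the distinguished slot $x'_{r'+1}$, so I would first reorder $A$ to place $e$ at the tail of its $z$-list, then carry out the sign analysis as above, and finally verify that the change in the permutation-sign parameter $s$ recombines with the explicit prefactor $(-1)^{(r+s)(t+1)}$ to yield the asserted $(-1)^{d(t+1)}$.

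The principal obstacle is the sign bookkeeping, particularly in the $e \in F$ subcase, where $D$, the $z$-list and $s$ all transform simultaneously. I would handle it by introducing a single canonical ordering of $A \cup \{e\}$ as an intermediate target and reducing both $\alpha_A$ and $\alpha_{A \cup \{e\}}$ to that common ordering before comparing; with the two subclaims on adjacent swaps in hand, the remaining verification is a straightforward but delicate tally of signs.
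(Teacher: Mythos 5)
Your proposal takes essentially the same route as the paper's proof: both cases ($e \notin F$ and $e \in F$) are handled by reordering the lists defining $\alpha_A f_A(e)$ into those defining $\alpha_{A\cup\{e\}}$, with Lemma~\ref{segre1} supplying the factor $(-1)^{t+1}$ per transposition, the cancellation of $f_A(e)$ against the $z_r$-denominator in the $e\in F$ subcase, and a final tally against the prefactor $(-1)^{(r+s)(t+1)}$. The only difference is one of emphasis --- you isolate the adjacent-transposition subclaim explicitly where the paper states it more tersely, and you defer the $e\in F$ sign count that the paper carries out in full --- but the plan is sound and matches the paper's argument.
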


\begin{proof}
If $e \not\in F$ then $F \setminus (A \cup \{ e \})=F \setminus A$ and $A \cap F=(A\cup \{ e \}) \cap F$ is immediate. We have to reorder the numerator of $\alpha_Af_A(e)$ so that it coincides with $\alpha_{A \cup \{ e \}}$. Then we can write $\alpha_{A \cup \{ e \}}$ in place of $\alpha_Af_A(e)$. By Lemma~\ref{segre1}, this changes the sign by 
$$
(-1)^{m(t+1)},
$$
where $m$ is the number of elements of $A \setminus F$ that come after $e$ in the ordering. 
Since $e \not\in F$, and the elements of $F$ come first in the ordering, $m$ is the number of elements of $A$ that come after $e$ in the ordering. Therefore, $m=d$ and this case is done.

If $e \in F$ then we have to reorder the denominator of $\alpha_A$ to move the $e \in F \setminus A$ so that it is $z_r$.  Then, up to getting the sign right, we are able to write $\alpha_{A \cup \{e \}}$ in place of $\alpha_Af_A(e)$, since the $f_A(e)$ cancels with one in the denominator. Note that $e \in F \cap (A \cup \{ e \})$. 

This reordering, according to Lemma~\ref{segre1}, changes the sign by
$$
(-1)^{m_1(t+1)},
$$
where $m_1$ is the number of elements of $F \setminus (A \cup \{ e\})$ which come after $e$ in the ordering.

Note that $|F \setminus (A \cup \{ e \})|=|F \setminus A|-1$, so we have to decrease $r$ by $1$ when we replace $\alpha_Af_A(e)$ by $\alpha_{A \cup \{e \}}$, while $s$ increases by the number of elements of $F\setminus (A \cup \{ e\})$ which come before $e$ in the ordering plus the number of elements of $F \cap A$ that come after $e$ in the ordering. So, all in all, the sign changes by 
$$
(-1)^{(|F\setminus (A \cup \{ e\})|+m_2-1)(t+1)}
$$
where $m_2$ is the number of elements of $F \cap A=(F\setminus \{ e\}) \cap A$ that come after $e$ in the ordering. Since the elements of $F$ come first in the ordering and $e \in F$, we have that $m_3=|(F\setminus \{ e\}) \cap A|-m_2$ is the number of elements of $A$ that come before $e$ in the ordering. Therefore, the sign changes by
$$
(-1)^{(|F \setminus \{ e \}|+m_3-1)(t+1)}.
$$
The lemma follows since $d=k-2-m_3$ and $|F\setminus \{ e \}|=k-3$.
\end{proof}

\begin{lemma} \label{theeqn}
Let $S$ be an arbitrarily ordered arc of size $q+k-1-t$ and let $E$ be a subset of $S$ of size $k+t$. For any subset $A$ of $E$ of size $k-2$,
$$
\sum \alpha_C \prod_{u \in E \setminus C} \det(u,C)^{-1}=0,
$$
where the sum runs over the subsets $C$ of $E$ of size $k-1$ containing $A$.
\end{lemma}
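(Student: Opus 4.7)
The plan is to start from Lemma~\ref{thesumiszero} and rewrite each term in the sum so that the $A$-dependence factors out completely, leaving something that depends only on $C = A \cup \{e\}$. There is a natural bijection between the summation index $e \in E \setminus A$ of Lemma~\ref{thesumiszero} and the $(k-1)$-subsets $C \subset E$ containing $A$, given by $C = A \cup \{e\}$, so it is only a matter of converting each factor in the $e$-th term into a ``$C$-form'' up to signs that are constant over the sum.

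First I would handle the determinant factors. By definition $d_A(u,e) = \det(u,e,A)$, and since $C$ inherits the ordering of $S$, the tuple $(u,e,A)$ differs from $(u,C)$ by a permutation that moves $e$ past the $j = (k-2) - d$ elements of $A$ that precede $e$ in the ordering, where $d$ is as in Lemma~\ref{atoc}. Hence $d_A(u,e) = (-1)^{k-2-d} \det(u,C)$. Taking the product over the $|E \setminus C| = t+1$ values of $u$ and inverting gives
$$
\prod_{u \in E \setminus C} d_A(u,e)^{-1} = (-1)^{(k-2-d)(t+1)} \prod_{u \in E \setminus C} \det(u,C)^{-1}.
$$

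Next I would handle the $f_A(e)$ factor using Lemma~\ref{atoc}, which yields $f_A(e) = (-1)^{d(t+1)} \alpha_C \alpha_A^{-1}$ (noting $\alpha_A \neq 0$ since it is a product of ratios of $f$-values, each of which is non-zero by the arc property applied to the defining linear forms). Multiplying the two contributions, the exponents combine as $(k-2-d)(t+1) + d(t+1) = (k-2)(t+1)$, which is independent of $e$. Thus the general term of Lemma~\ref{thesumiszero} becomes
$$
f_A(e) \prod_{u \in E \setminus (A\cup\{e\})} d_A(u,e)^{-1} = (-1)^{(k-2)(t+1)} \alpha_A^{-1} \, \alpha_C \prod_{u \in E \setminus C} \det(u,C)^{-1}.
$$

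Finally I would substitute this into Lemma~\ref{thesumiszero}, reindex the sum as running over $C = A \cup \{e\}$, and factor out the non-zero constant $(-1)^{(k-2)(t+1)} \alpha_A^{-1}$, which yields the required identity. The only real obstacle is the sign bookkeeping in the first step; once one sees that $d$ and $k-2-d$ exactly partition the $k-2$ positions of $A$ relative to $e$, the two signs from the determinant reordering and from Lemma~\ref{atoc} combine into a constant and the result drops out.
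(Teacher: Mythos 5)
Your proposal is correct and follows essentially the same route as the paper: start from Lemma~\ref{thesumiszero}, reorder the determinant $\det(u,e,A)$ into $\det(u,C)$ picking up a sign $(-1)^{k-2-d}$ (the paper writes this as $(-1)^{k-2+d}$, which is the same), and use Lemma~\ref{atoc} to convert $f_A(e)$ into $\alpha_C/\alpha_A$ so that the two $d$-dependent signs cancel and a constant factor $(-1)^{(k-2)(t+1)}\alpha_A^{-1}$ can be pulled out of the sum. The sign bookkeeping and the observation that $\alpha_A\neq 0$ are both handled correctly.
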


\begin{proof}
By Lemma~\ref{thesumiszero}, since $E$ is a subset of $S$ of size $t+k$ containing $A$, 
$$
\sum_{e \in E \setminus A} f_A(e) \prod_{u \in E \setminus (A\cup \{e\})} \det(u,e,A)^{-1}=0.
$$
Observe that 
$$
\det(u,e,A)=(-1)^{k-2}\det(u,A,e)=(-1)^{k-2+d} \det(u,A \cup \{ e \}),
$$
where $d$ is the number of elements of $A$ which come after $e$ in the ordering. Since there are $t+1$ terms in the product, when we multiply by $\alpha_A$ and apply Lemma~\ref{atoc}, the lemma follows.
\end{proof}

\section{Proofs of Theorem~\ref{wone}, Theorem~\ref{wtwo} and Theorem~\ref{wtwo2}}

Let $n$ be a non-negative integer and let $G$ be an arc of $\mathrm{V}_k({\mathbb F}_q)$ of size at least $k+n$. Order the elements of $G$ arbitrarily and let $F$ be the set of the first $k-2$ vectors of $G$. 

Recall that we defined the matrix $\mathrm{M}_{ n}$ as follows. For each subset $E$ of $G$ of size $|G|-n$ and subset $A$ of $E$ of size $k-2$, we get a column of the matrix $\mathrm{M}_{ n}$, whose rows are indexed by subsets $C$ of $G$ of size $k-1$, where a $(C,(A,E))$ entry is
$$
\prod_{u \in G \setminus E} \det(u,C),
$$
if and only if $A \subset C$ and zero otherwise.

Let $v_G$ be a vector whose coordinates are indexed by the subsets $C$ of $G$ of size $k-1$ and whose $C$ coordinate is 
$$
\alpha_C \prod_{z \in G \setminus C} \det(z,C)^{-1}.
$$ 

\begin{lemma} \label{upsolution}
If $G$ can be extended to an arc of size $q+2k+n-1-|G|$ then
$v_G\mathrm{M}_{ n}=0$.
\end{lemma}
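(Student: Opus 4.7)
The plan is to show that each entry of the row vector $v_G \mathrm{M}_n$ is exactly the left-hand side of the equation in Lemma~\ref{theeqn}, applied inside the extension arc $S$, and therefore vanishes.

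First I would fix notation: let $S$ be the extending arc, set $|S|=q+2k+n-1-|G|$, so that $t:=q+k-1-|S|=|G|-k-n$. A column of $\mathrm{M}_n$ is indexed by a pair $(A,E)$ with $E\subset G$, $|E|=|G|-n=t+k$, and $A\subset E$, $|A|=k-2$. Thus $E$ is a subset of $S$ of size $t+k$ containing $A$, which is exactly the setup needed to apply Lemma~\ref{theeqn}.

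Next I would compute $(v_G\mathrm{M}_n)_{(A,E)}$ directly. By definition of the matrix and of $v_G$,
\[
(v_G\mathrm{M}_n)_{(A,E)}=\sum_{\substack{C\subset G,\,|C|=k-1\\ A\subset C}}\alpha_C\Bigl(\prod_{z\in G\setminus C}\det(z,C)^{-1}\Bigr)\Bigl(\prod_{u\in G\setminus E}\det(u,C)\Bigr).
\]
The key observation is that $\det(u,C)=0$ whenever $u\in C$, since then two rows of the determinant coincide. Hence the product $\prod_{u\in G\setminus E}\det(u,C)$ vanishes unless $(G\setminus E)\cap C=\emptyset$, i.e.\ unless $C\subset E$. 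So the sum is really over those $(k-1)$-subsets $C$ of $E$ that contain $A$.

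For such a $C$, since $C\subset E$ we have the disjoint decomposition $G\setminus C=(G\setminus E)\sqcup(E\setminus C)$, and therefore
\[
\prod_{z\in G\setminus C}\det(z,C)^{-1}\cdot\prod_{u\in G\setminus E}\det(u,C)=\prod_{z\in E\setminus C}\det(z,C)^{-1},
\]
the factors indexed by $G\setminus E$ cancelling. Thus
\[
(v_G\mathrm{M}_n)_{(A,E)}=\sum_{\substack{C\subset E\\A\subset C}}\alpha_C\prod_{z\in E\setminus C}\det(z,C)^{-1},
\]
which is exactly the sum appearing in Lemma~\ref{theeqn} and hence equals $0$. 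Since this holds for every column $(A,E)$, we conclude $v_G\mathrm{M}_n=0$.

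There is really no hard step: the proof is essentially bookkeeping, and the only thing one has to check carefully is the cancellation of the $G\setminus E$ factors together with the vanishing of $\det(u,C)$ for $u\in C$, which restricts the sum to $C\subset E$ so that Lemma~\ref{theeqn} applies.
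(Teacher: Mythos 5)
Your proposal is correct and follows exactly the same route as the paper: the paper's proof is a one-sentence assertion that multiplying $v_G$ by the $(A,E)$ column of $\mathrm{M}_n$ yields the equation of Lemma~\ref{theeqn}, and you simply make the underlying bookkeeping explicit (the vanishing of $\det(u,C)$ for $u\in C$ restricting the sum to $C\subset E$, and the cancellation of the $G\setminus E$ factors). The only point worth adding is that the ordering of $S$ in Lemma~\ref{theeqn} should be chosen to extend the given ordering of $G$, so that the $\alpha_C$ in $v_G$ agree with those in the lemma; this is implicit in the paper as well.
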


\begin{proof}
Suppose that $G$ can be extended to $S$, an arc of size $q+2k+n-1-|G|$. Let $t=|G|-k-n$. For each subset $E$ of $G$ of size $k+t$, the equation in Lemma~\ref{theeqn} for a subset $A$ of $E$ of size $k-2$, is the equation obtained by multiplying $v_G$ with the $(A,E)$ column of $\mathrm{M}_{ n}$.
\end{proof}

\begin{proof} (of Theorem~\ref{wone}).
If there is a vector of weight one in the column space of $\mathrm{M}_{ n}$ then the scalar product of this vector with $v_G$, according to Lemma~\ref{upsolution}, gives us the equation 
$$
\alpha_C \prod_{z \in G \setminus C} \det(z,C)^{-1}=0,
$$  
for some subset $C$ of $G$ of size $k-1$. This is a contradiction, since all terms in this product are non-zero.
\end{proof}

\begin{proof} (of Theorem~\ref{wtwo})
Let $A$ be a subset of $G$ of size $k-2$ and let $S$ be an arc of size $q+2k+n-1-|G|$ containing $G$. As before, let $t=q+k-1-|S|$, so $|G|=k+t+n$. 

Since $(G,n)$ has property $W$ there are elements $x,y_1,\ldots,y_{t+1}$ of $G$ and vectors $u_1,\ldots,u_{t+1}$ in the column space of $\mathrm{M}_{ n}$, where $u_i$ has non-zero $C$ coordinates if and only if $C=A\cup \{x\}$ or $C=A \cup \{y_i \}$. By Lemma~\ref{upsolution}, the scalar product of $u_i$ with $v_G$, gives the equation 
$$
\alpha_{A\cup \{x\}} \prod_{z \in G \setminus A \cup \{x\}} \det(z,A \cup \{x \})^{-1}= a_i
\alpha_{A\cup \{y_i\}} \prod_{z \in G \setminus A \cup \{y_i\}} \det(z,A \cup \{y_i \})^{-1},
$$
for some $a_i \in {\mathbb F}_q$.

By Lemma~\ref{atoc}, this determines 
$$
\frac{f_A(y_i)}{f_A(x)},
$$
so this quantity is determined by $G$. Hence, $G$ determines the value of 
$$
\frac{f_A(X)}{f_A(x)}
$$
at $t+1$ linearly independent points. Since $f_A(X)$ is a homogeneous polynomial of degree $t$, this determines $f_A(X)$, so $G$ determines $f_A(X)$. Each factor of $f_A(X)$ must be a linear form $\alpha$, where $\ker \alpha$ is a hyperplane intersecting $S$ in $A$. Therefore, $G$ determines all these hyperplanes, which is what we wanted to prove.
\end{proof}

\begin{proof} (of Theorem~\ref{wtwo2})
The algebraic hypersurface $\phi_S$ can be constructed from a subset $E$ of $S$ of size $k+t-1$ if $q$ is even and size $k+2t-1$ if $q$ is odd, if one knows all the co-secants to $S$ containing $A$ for every subset $A$ of $E$, see \cite{BBT1990} or Section~\ref{appendix}. In Section~\ref{appendix} we give an explicit description of $\phi_S$. The condition $2n \geqslant |G|-k-1$ implies $|G| \geqslant |E|$ if $q$ is odd, so by Theorem~\ref{wtwo} we can construct $\phi_S$.
\end{proof}

\section{The algebraic hypersurface associated with an arc} \label{appendix}

In this section we explicitly construct the algebraic hypersurface $\phi_S$ associated with an arc $S$ of $\mathrm{V}_k({\mathbb F}_q)$, introduced in \cite{BBT1990}.

As before, let $t=q+k-1-|S|$. Let $E$ be a subset of $S$ of size $k+t-1$ is $q$ is even and $k+2t-1$ if $q$ is odd. To able to find such an $E$, and therefore construct $\phi_S$, this imposes a lower bound on the size of $S$.

For $q$ even, define a polynomial in $k-1$ vector variables, so $k(k-1)$ indeterminates,
$$
\phi_S(Y_1,\ldots,Y_{k-1})=\sum_{C} \alpha_C \prod_{z \in E \setminus C} \frac{\det(z,Y_1,\ldots,Y_{k-1})}{\det(z,C)},
$$
where the sum runs over all subsets $C$ of size $k-1$ of $E$. 

For $q$ odd, define a polynomial in $k-1$ vector variables,
$$
\phi_S(Y_1,\ldots,Y_{k-1})=\sum_{C}  \alpha_C^2 \prod_{z \in E \setminus C} \frac{\det(z,Y_1,\ldots,Y_{k-1})}{\det(z,C)} ,
$$
where the sum runs over all subsets $C$ of size $k-1$ of $E$. 

Although $\phi_S$ is defined as a polynomial in $k-1$ vector variables, a simple change of variables shows that in fact it can be written as a polynomial in $k$ indeterminates. Let 
$$
Z_i=(-1)^{i-1}\det(Y_1,\ldots,Y_{k-1}),
$$
where the $i$-th coordinate of $Y_j$ has been deleted, so the determinant is of a $(k-1) \times (k-1)$ matrix. Then 
$$
\phi_S=\phi_S(Z_1,\ldots,Z_k).
$$
Let $\{ c_1,\ldots,c_{k-1} \}$ be a set of $k-1$ linearly independent vectors of $\mathrm{V}_k({\mathbb F}_q)$. With $Y_j=c_j$ for $j=1,\ldots,k-1$, this defines $z_i=Z_i$, for $i=1,\ldots,k$. The vector $(z_1,\ldots,z_k)$ is a vector in the dual space, dual to the hyperplane spanned by $\{ c_1,\ldots,c_{k-1} \}$. Suppose that $\{ c_1,\ldots,c_{k-1} \}$ spans a co-secant hyperplane to $S$. Then $(c_1,\ldots,c_{k-1})=(x,a_1,\ldots,a_{k-2})$, where $A=\{a_1,\ldots,a_{k-2}\}$ is a subset of $S$ and $x$ is a zero of $f_A(X)$, for some subset $A$ of $S$. By Theorem~\ref{hyp}, the vector $(z_1,\ldots,z_k)$ is a zero of $\phi_S$ and if $q$ is odd, it is a zero of multiplicity two on the line dual to the subspace spanned by $A$. This is precisely the properties that the hypersurface constructed by Blokhuis, Bruen and Thas in \cite{BBT1990} has. Therefore, we have an explicit description of this hypersurface in terms of the $\alpha_C$'s.

\begin{theorem} \label{hyp}
For any subset $A=\{a_1,\ldots,a_{k-2} \}$ of $S$ of size $k-2$
$$
\phi_S(X,a_1,\ldots,a_{k-2})=\alpha_A f_A(X),
$$
if $q$ is even and
$$
\phi_S(X,a_1,\ldots,a_{k-2})=\alpha_A^2 f_A(X)^2,
$$
if $q$ is odd.
\end{theorem}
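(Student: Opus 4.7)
The plan is to substitute $Y_1 = X$ and $Y_{i+1} = a_i$ for $i = 1, \ldots, k-2$ into the defining sum of $\phi_S$, observe that only those $(k-1)$-subsets $C$ of $E$ containing $A$ contribute, rewrite the surviving terms in the $d_A$-notation of Section~2, and finally recognise the resulting expression as a Lagrange interpolation of $f_A(X)$ (in the even case) or of $f_A(X)^2$ (in the odd case).

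For the first reduction, note that if $z \in A$ then $\det(z, X, a_1, \ldots, a_{k-2})$ has a repeated column and vanishes; hence for any $C$ with $A \not\subset C$, some $a_i$ lies in $E \setminus C$ and kills the corresponding term. Only the subsets $C = A \cup \{e\}$ with $e \in E \setminus A$ survive. Using alternation of $\det$, I would write
$$\det(z, A \cup \{e\}) = (-1)^{k-2-d_e} d_A(z, e),$$
where $d_e$ is the number of elements of $A$ coming after $e$ in the ordering of $S$, and $\det(z, X, a_1, \ldots, a_{k-2}) = d_A(z, X)$. Lemma~\ref{atoc} then gives $\alpha_{A \cup \{e\}} = (-1)^{d_e(t+1)} \alpha_A f_A(e)$.

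For $q$ even every sign collapses, so after substitution
$$\phi_S(X, a_1, \ldots, a_{k-2}) = \alpha_A \sum_{e \in E \setminus A} f_A(e) \prod_{u \in E \setminus (A \cup \{e\})} \frac{d_A(u, X)}{d_A(u, e)},$$
which by Lemma~\ref{interp} (applicable since $|E| = k + t - 1$) equals $\alpha_A f_A(X)$. For $q$ odd, squaring $\alpha_{A \cup \{e\}}$ removes $(-1)^{d_e(t+1)}$, while the reordering sign enters to the power $(k-2-d_e) \cdot |E \setminus C| = (k-2-d_e) \cdot 2t$, which is even. The same computation yields
$$\phi_S(X, a_1, \ldots, a_{k-2}) = \alpha_A^2 \sum_{e \in E \setminus A} f_A(e)^2 \prod_{u \in E \setminus (A \cup \{e\})} \frac{d_A(u, X)}{d_A(u, e)},$$
and it remains to identify the right-hand sum with $f_A(X)^2$.

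For this last step I would argue by interpolation. Working in a basis whose last $k-2$ vectors are $A$, the form $f_A(X)^2$ is a homogeneous polynomial of degree $2t$ in two variables, as is the sum on the right; they agree at each $X = e$ for $e \in E \setminus A$, and there are $|E \setminus A| = 2t + 1$ such points, pairwise linearly independent modulo $A$ by the arc property. Since this is exactly the number of values that determine a binary form of degree $2t$, the two polynomials coincide. The main obstacle is the sign bookkeeping for $q$ odd: one must verify that both the factor $(-1)^{d_e(t+1)}$ from Lemma~\ref{atoc} and the reordering sign from $\det(z, A \cup \{e\})$ enter only through their squares, which is precisely why the paper takes $|E| = k + 2t - 1$ and uses $\alpha_C^2$ in place of $\alpha_C$; correspondingly, the doubling of $|E|$ is exactly what is needed to Lagrange-interpolate the degree-$2t$ polynomial $f_A^2$, so the two adjustments are forced simultaneously.
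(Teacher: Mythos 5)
Your computation is correct as far as it goes, but it only proves the theorem for subsets $A$ that are contained in the fixed interpolation set $E$. The reduction ``only the $C=A\cup\{e\}$ with $e\in E\setminus A$ survive'' rests on the implication: if $A\not\subset C$ then some $a_i$ lies in $E\setminus C$. That implication needs $A\subseteq E$; if some $a_i\notin E$, then $a_i\notin C$ does not put $a_i$ into $E\setminus C$, and the corresponding term does not vanish. Since $|E|=k+t-1$ (resp.\ $k+2t-1$) while $|S|=q+k-1-t$, in general $E\subsetneq S$ and the theorem is asserted for \emph{every} $(k-2)$-subset $A$ of $S$ --- indeed this generality is exactly what is used afterwards to conclude that all co-secant hyperplanes of $S$ lie on $\phi_S$. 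So your argument establishes only the base case of what the paper proves.

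The paper closes this gap by induction on $j=k-2-|A\cap E|$. The key observations are: (i) $\phi_S$ is symmetric in its $k-1$ vector arguments (each transposition multiplies every summand by $(-1)^{|E\setminus C|}$, which is $+1$ in characteristic $2$ and $(-1)^{2t}=+1$ for $q$ odd), so for $a\in A\setminus E$ and $x\in E\setminus A$ one has $\phi_S(x,A)=\phi_S(a,(A\setminus\{a\})\cup\{x\})$; (ii) the set $(A\setminus\{a\})\cup\{x\}$ meets $E$ in one more point, so the inductive hypothesis together with Lemma~\ref{atoc} gives $\phi_S(x,A)=\alpha_{A\cup\{x\}}=\alpha_A f_A(x)$ (squared versions for $q$ odd) for every $x\in E\setminus A$; and (iii) one then repeats your interpolation argument, since $|E\setminus A|\geqslant t+1$ (resp.\ $2t+1$) and both sides are binary forms of degree $t$ (resp.\ $2t$) with respect to a basis containing $A$. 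Your sign bookkeeping for the case $A\subseteq E$ (the reordering sign entering to the even power $2t$, and the sign from Lemma~\ref{atoc} being squared) is accurate and in fact slightly more explicit than the paper's, but without the induction step the proof is incomplete.
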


\begin{proof}
Suppose $q$ is even. If $A$ is a subset of $E$ then
$$
\phi_S(X,a_1,\ldots,a_{k-2})=\sum_{e \in E \setminus A} \alpha_{A \cup \{e\} } \prod_{z \in E \setminus (A \cup \{ e \})} \frac{\det(z,X,A)}{\det(z,e,A)}.
$$
By Lemma~\ref{atoc}, $\alpha_{A \cup \{e\} }=\alpha_A f_A(e)$. Therefore, by Lemma~\ref{interp},
$$
\phi_S(X,a_1,\ldots,a_{k-2})=\alpha_A f_A(X).
$$

Suppose $|A \cap E|=k-2-j$. 

The above proves the theorem for $j=0$ and now we proceed by induction on $j$.

Let $x,y \in E\setminus A$ and $a \in A \setminus E$. Since $q$ is even, $\phi_S(x,A)=\phi_S(a,(A \setminus \{a \}) \cup \{x\})$. 
By induction,
$$
\phi_S(a,(A \setminus \{a \}) \cup \{x\})=\alpha_{(A \setminus \{a \}) \cup \{x\}}f_{(A \setminus \{a \}) \cup \{x\}}(a)=\alpha_{A \cup \{ x\}}=\alpha_A f_A(x),
$$
where the last two equalities use Lemma~\ref{atoc}. Hence,
$$
\phi_S(x,A)=\alpha_A f_A(x),
$$
and
$$
\frac{\phi_S(y,A)}{\phi_S(x,A)}=\frac{f_A(y)}{f_A(x)}.
$$
Thus, the evaluation of 
$$
\frac{f_A(x)}{\phi_S(x,A)} \phi_S(X,A)
$$
is the evaluation of $f_A(X)$ at all points of $E \setminus A$. Now, we argue as in Lemma~\ref{interp}. Both are homogeneous polynomials of degree $t$ which, with respect to a basis containing $A$, are polynomials in two variables. Since they agree at at least $t+1$ linearly independent points they are the same. We have already observed that
$\phi_S(x,A)=\alpha_A f_A(x)$, which completes the proof.

Suppose $q$ is odd. If $A$ is a subset of $E$ then
$$
\phi_S(X,a_1,\ldots,a_{k-2})=\sum_{e \in E \setminus A} \alpha_{A \cup \{e\} }^2 \prod_{z \in E \setminus (A \cup \{ e \})} \frac{\det(z,X,A)}{\det(z,e,A)}.
$$
By Lemma~\ref{atoc}, $\alpha_{A \cup \{e\} }^2=\alpha_A^2 f_A(e)^2$ so, as in Lemma~\ref{interp} but interpolating at $2t+1$ linearly independent points,
$$
\phi_S(X,a_1,\ldots,a_{k-2})=\alpha_A^2 f_A(X)^2.
$$

Suppose $|A \cap E|=k-2-j$. 

The above proves the theorem for $j=0$ and now we proceed by induction on $j$.

Let $x,y \in E\setminus A$ and $a \in A \setminus E$. Arguing as in the $q$ even case,
$$
\phi_S(x,A)=\alpha_A ^2f_A(x)^2,
$$
and so
$$
\frac{\phi_S(y,A)}{\phi_S(x,A)}=\frac{f_A(y)^2}{f_A(x)^2}.
$$
Thus, the evaluation of 
$$
\frac{f_A(x)^2}{\phi_S(x,A)} \phi_S(X,A)
$$
is the evaluation of $f_A(X)^2$ at all points of $E \setminus A$. Now, we argue as in Lemma~\ref{interp}. Both are homogeneous polynomials of degree $2t$ which, with respect to a basis containing $A$, are polynomials in two variables. Since they agree at at least $2t+1$ linearly independent points they are the same. 
\end{proof}

\begin{theorem} \label{forgetqeven}
If $q$ is even then the dimension of the null space of the row space of $\mathrm{M}_{ n}$ is 
$$
{|G|-n-1 \choose k-1}.
$$
Furthermore, the hypotheses in Theorem~\ref{wone}, Theorem~\ref{wtwo} and Theorem~\ref{wtwo2} are never satisfied, apart from in the case $|G|=k+n$ where the hypotheses of Theorem~\ref{wtwo} and Theorem~\ref{wtwo2} are trivially satisfied.
\end{theorem}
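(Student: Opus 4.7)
The strategy is to identify the null space of $\mathrm{M}_n$ (the vectors $v$ with $v\mathrm{M}_n=0$) with the space $P_t$ of homogeneous polynomials of degree $t := |G|-n-k$ in $k$ variables. Since $\dim P_t = \binom{t+k-1}{k-1} = \binom{|G|-n-1}{k-1}$, this identification would give the dimension claim. The first step is the invertible substitution $w_C := v_C \prod_{z \in G \setminus C} \det(z, C)$, which converts $v\mathrm{M}_n=0$ into
$$
\sum_{\substack{C \subset E,\, A \subset C \\ |C|=k-1}} w_C \prod_{z \in E \setminus C} \det(z, C)^{-1} = 0, \qquad A \subset E \subset G,\ |A|=k-2,\ |E|=|G|-n,
$$
which has the shape of the identity in Lemma~\ref{theeqn}. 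For each $\phi \in P_t$, define $w^{(\phi)}_C := \phi(Z_1(C), \ldots, Z_k(C))$, where $Z_i(C)$ is the $i$-th signed Plücker coordinate of $C$ (as in Section~\ref{appendix}), computed from any ordering of $C$. In characteristic $2$ the sign of a determinant is irrelevant (since $-1=1$), so $Z_i(C)$ and hence $w^{(\phi)}_C$ are well-defined on the unordered set $C$.

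To verify that $w^{(\phi)}$ is a null vector, fix $A$ and expand cofactors along the first row: $Z_i(A \cup \{e\})$ is a linear form in the coordinates of $e$ whose coefficients depend only on $A$, so $\psi_A(X) := \phi(Z_1(X, A), \ldots, Z_k(X, A))$ is a homogeneous polynomial of degree $t$ in $X$. The equation to verify becomes
$$
\sum_{e \in E \setminus A} \psi_A(e) \prod_{z \in E \setminus (A \cup \{e\})} \det(z, e, A)^{-1} = 0,
$$
which is Lemma~\ref{thesumiszero} with $\psi_A$ in place of $f_A$: the proofs of Lemma~\ref{interp} and Lemma~\ref{thesumiszero} use only that $\psi_A$ is a homogeneous polynomial of degree $t$ in two variables (after choosing a basis extending $A$), and in characteristic $2$ the symmetry $d_A(u,v)=d_A(v,u)$ supplies the final vanishing. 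This produces a linear map $\Psi: P_t \to \{v : v\mathrm{M}_n=0\}$.

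I would next show $\Psi$ is a bijection. Injectivity amounts to showing no nonzero $\phi \in P_t$ vanishes at every Plücker point $Z(C)$ for $C \in \binom{G}{k-1}$; since distinct $(k-1)$-subsets of an arc span distinct hyperplanes (any hyperplane meets $G$ in at most $k-1$ elements), the $Z(C)$ are pairwise non-proportional nonzero points of $\mathbb{F}_q^k$, and for $|G| \geq k+t$ an induction on $t$ shows they impose independent conditions on $P_t$. Surjectivity requires the matching upper bound $\dim\{v : v\mathrm{M}_n=0\} \leq \binom{|G|-n-1}{k-1}$, which I would prove by induction on $n$, exploiting that the $(A, E \setminus \{g\})$-column of $\mathrm{M}_{n+1}$ is the $(A, E)$-column of $\mathrm{M}_n$ rescaled entrywise by $\det(g, C)$. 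I expect this upper bound to be the main technical obstacle; it might alternatively be obtained by exhibiting enough independent columns of $\mathrm{M}_n$ directly.

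The non-satisfiability statements require only the existence of sufficiently many $w^{(\phi)}$. For any $(k-1)$-subset $C_0$, the vector $Z(C_0)$ is nonzero, so some $\phi \in P_t$ satisfies $\phi(Z(C_0)) \neq 0$ (take a power of a linear form non-vanishing at $Z(C_0)$); the corresponding null vector is nonzero at coordinate $C_0$, so $e_{C_0}$ does not lie in the column space of $\mathrm{M}_n$ and the hypothesis of Theorem~\ref{wone} fails. For Property~$W$: if $C \neq C'$ are $(k-1)$-subsets both containing $A$, then $Z(C)$ and $Z(C')$ are non-proportional in $\mathbb{F}_q^k$, so whenever $t \geq 1$ one can select $\phi, \phi' \in P_t$ with $\phi(Z(C))\phi'(Z(C')) \neq \phi'(Z(C))\phi(Z(C'))$; the restriction of the null space to coordinates $(v_C, v_{C'})$ then has rank two, forbidding any weight-two vector supported on $\{C,C'\}$ from belonging to the column space. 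Hence Property~$W$ fails whenever $|G|>k+n$. In the remaining case $|G|=k+n$ (so $t=0$, $P_0$ the line of constants, null space $1$-dimensional), Property~$W$ requires only one weight-two column per $A$, which is supplied by the $(A, E)$-column of $\mathrm{M}_n$ for any $k$-subset $E \supset A$ (its only two nonzero entries sit at the pair of $(k-1)$-subsets of $E$ containing $A$). So the hypotheses of Theorem~\ref{wtwo} and Theorem~\ref{wtwo2} are trivially satisfied in that case.
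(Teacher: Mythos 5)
Your construction of null vectors is sound and is, in substance, the paper's own device: for $\phi$ homogeneous of degree $t=|G|-n-k$, the vector $w^{(\phi)}_C=\phi(Z(C))$ does satisfy the equations $\sum_{A\subset C\subset E} w_C\prod_{z\in E\setminus C}\det(z,C)^{-1}=0$, because $\psi_A(X)=\phi(Z(X,A))$ is a degree-$t$ form in two variables and the proofs of Lemma~\ref{interp} and Lemma~\ref{thesumiszero} apply to any such form, with characteristic $2$ killing all sign ambiguities. (The paper parametrizes instead by the values $\alpha_C$ for $C$ inside a fixed set $E$ of size $k+t-1$, i.e.\ by the interpolating polynomial $\phi(Y_1,\ldots,Y_{k-1})$ of Section~\ref{appendix}; after the substitution $Z_i=(-1)^{i-1}\det(\cdot)$ this is the same space of forms, so the lower bound is the same argument in different clothes.) Your handling of the ``Furthermore'' clause is also correct, and has the nice feature of needing only the existence of these null vectors, not the exact dimension: a weight-one column vector at $C_0$ forces $v_{C_0}=0$ on the null space, killed by $\phi=\lambda^t$ with $\lambda(Z(C_0))\neq 0$, and a weight-two vector supported on $\{C,C'\}$ forces the projection to $(v_C,v_{C'})$ to have rank $\leqslant 1$, killed by $\phi=\lambda^t$, $\phi'=\mu^t$ with $\lambda,\mu$ separating the non-proportional points $Z(C),Z(C')$; the $t=0$ case of Property~$W$ is as you say.

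The genuine gap is the upper bound $\dim\{v: v\mathrm{M}_n=0\}\leqslant\binom{|G|-n-1}{k-1}$, which you explicitly defer as ``the main technical obstacle,'' and your proposed route does not close. The rescaling relation you cite is true, but it only shows that for $v$ in the left null space of $\mathrm{M}_{n+1}$ and $g\in G$, the vector $v^{(g)}=(v_C\det(g,C))_C$ annihilates those columns $(A,E)$ of $\mathrm{M}_n$ with $g\in E\setminus A$ --- not all columns --- so you cannot conclude $v^{(g)}$ lies in the left null space of $\mathrm{M}_n$, and the induction on $n$ has no inductive hypothesis to invoke. The paper proves the upper bound by a determination (extrapolation) argument instead: writing $(v)_C=\alpha_C\prod_{z\in G\setminus C}\det(z,C)^{-1}$ and fixing $E$ of size $k+t-1$, the equation of Lemma~\ref{theeqn} for the column $(A,E\cup\{c\})$, where $C=A\cup\{c\}$, $A=C\cap E$, $|C\cap E|=k-2$, contains $\alpha_C$ with non-zero coefficient and otherwise only $\alpha_{C'}$ with $C'\subset E$; hence $\alpha_C$ is a linear function of the $\alpha_{C'}$, $C'\subset E$, and iterating this determines every coordinate of $v$ from the $\binom{k+t-1}{k-1}$ coordinates indexed by subsets of $E$. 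Some such argument (or a genuinely independent column count) is indispensable for the dimension formula, which is the first assertion of the theorem.

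A secondary, fixable weakness is injectivity of your map $\Psi$. The phrase ``the $Z(C)$ impose independent conditions on $P_t$'' cannot be meant literally (there are more points than $\dim P_t$ in general), and ``induction on $t$'' is not an argument. What is needed is that no non-zero degree-$t$ form vanishes at all $Z(C)$; this is true but requires a proof, e.g.\ by induction on $k$: for $g\in G$, the points $Z(C)$ with $g\in C$ lie in the dual hyperplane of $g$ and are the dual points of the $(k-2)$-subsets of the quotient arc $G/\langle g\rangle$ of size $|G|-1\geqslant (k-1)+t$, so the restriction of $\phi$ vanishes identically, whence the linear form dual to $g$ divides $\phi$; since $|G|>t$ and these forms are pairwise coprime, $\phi=0$. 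In the paper's parametrization this issue does not arise, since injectivity can be read off from the coordinates $\alpha_C$ with $C\subset E$ themselves.
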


\begin{proof}
Let $E$ be a subset of $G$ of size $k+t-1$, where $t=|G|-n-k$. For any choice of $\alpha_C$, where $C$ is a subset of $E$ of size $k-1$, we define
$$
\phi(Y_1,\ldots,Y_{k-1})=\sum_{C \subset E } \alpha_C \prod_{z \in E \setminus C} \frac{\det(z,Y_1,\ldots,Y_{k-1})}{\det(z,C)},
$$
where the sum runs over all subsets of size $k-1$ of $E$.

Define $f_A(X)=\phi(X,A)$ for each subset $A$ of $G$ of size $k-2$, and from this we define $\alpha_C$ for all $C \subset G$, as before. With respect to a basis containing $A$, $f_A(X)$ is a homogeneous polynomial of degree $t$ in two variables, so satisfies Lemma~\ref{interp}. Moreover $f_{D \cup \{ x \}}(y)=\phi(D,x,y)$, so 
$$
\frac{f_{D \cup \{ x \}}(y)}{f_{D \cup \{ y \}}(x)}=1,
$$
so Lemma~\ref{segre1} is also satisfied. Lemma~\ref{theeqn} is derived from these two lemmas, so we conclude that Lemma~\ref{theeqn} holds for these $f_A$. Since Lemma~\ref{theeqn} gives the set of equations defined by $\mathrm{M}_n$ (see Lemma~\ref{upsolution}), the vector $v_G$, whose coordinates are indexed by the subsets $C$ of $G$ of size $k-1$ and whose $C$ coordinate is 
$$
\alpha_C \prod_{z \in G \setminus C} \det(z,C)^{-1},
$$
is in the null space of the row space of $\mathrm{M}_n$.

Thus, the dimension of the null space of the row space of $\mathrm{M}_{ n}$ is at least ${|G|-n-1 \choose k-1}$.

Suppose that $v$ is in the null space of the row space of $\mathrm{M}_{ n}$ and let $\alpha_C$ be defined by
$$
(v)_C=\alpha_C \prod_{z \in G \setminus C} \det(z,C)^{-1}.
$$
Let $E$ be a subset of $G$ of size $k+t-1$ and define
$$
\phi(Y_1,\ldots,Y_{k-1})=\sum_{C } \alpha_C \prod_{z \in E \setminus C} \frac{\det(z,Y_1,\ldots,Y_{k-1})}{\det(z,C)},
$$
where the sum runs over all subsets of size $k-1$ of $E$. For any $C\subset E$, $\phi(C)=\alpha_C$. 

Moreover, since $v$ is in the null space of the row space of $\mathrm{M}_{ n}$ we have that the equation in Lemma~\ref{theeqn} holds. This implies that for any $C$ where $|C \cap E|=k-2$, $\phi(C)=\alpha_C$.
Therefore the $\alpha_C$'s, where $C \subset E$ determine $\alpha_C$ 
where $|C \cap E|=k-2$. Now we can deduce that $\alpha_C$, where $C \subset E'$ is determined by the same $\alpha_C$'s, when $|E' \cap E|=k+t-2$ and extrapolate to deduce that all $\alpha_C$'s are determined by the $\alpha_C$'s, where $C \subset E$.

Thus, the dimension of the null space of the row space of $\mathrm{M}_{ n}$ is at most ${|G|-n-1 \choose k-1}$.

If $\mathrm{M}_{ n}$ has a vector of weight one in its column space then this forces $\alpha_C=0$ for some $C$, which would make the dimension of the row space of $\mathrm{M}_{ n}$ strictly less than ${|G|-n-1 \choose k-1}$. If $\mathrm{M}_{ n}$ has property $W$ then there is a $C$ and a $C'$, subsets of $G$ of size $k-1$, intersecting in $k-2$ vectors, such that $\alpha_C=a \alpha_C'$, for some $a \in {\mathbb F}_q$. This would impose a condition on the null space of the row space and again imply that the dimension of the row space of $\mathrm{M}_{ n}$ is strictly less than ${|G|-n-1 \choose k-1}$.
\end{proof}

\section{Conclusions}

Although the proofs are quite technical, the main results in this article are easily stated and potentially useful. Given an arc $G$ in a space of odd characteristic, one can quickly determine an upper bound on how large an arc one can hope to extend it to. This is done by increasing $n$ one by one to obtain the minimum $n_0$ such that $\mathrm{M}_{ n_0+1}$ has a vector of weight one in its column space. Then Theorem~\ref{wone} implies that $G$ cannot be extended to an arc of size $q+2k+n_0-|G|$. If $(G,n_0)$ satisfies property $W$ and $S$ is an arc of size $q+2k-n_0-1-|G|$ containing $G$ then, by Theorem~\ref{wtwo}, we can determine the co-secants to $S$ containing only points of $G$. Furthermore, if $2n_0 \geqslant |G|-k-1$ then, by Theorem~\ref{wtwo2}, we can determine the algebraic hypersurface $\phi_S$ associated with $S$. 

It should be possible to prove explicit upper bounds if one assumes that $G$ has some structure. For example one might assume that in the planar case when $k$ is three, $G$ is contained in a cubic curve, or all but one or a few of the points of $G$ are contained in a conic. Likewise, for general $k$ one might assume that $G$ is contained in a normal rational curve or all but one or a few of the points of $G$ are contained in a normal rational curve. Using this structure one may then be able to calculate the column space of $\mathrm{M}_{ n}$.  If this is possible, one may also be able to determine precisely the large arcs to which $G$ extends. There should be many results of this type.

\begin{acknowledgements}
I would like to thank Ameera Chowdhury, Jan De Beule and Michel Lavrauw for their comments relating to this manuscript, they were most helpful.
\end{acknowledgements}

\affiliationone{
   Simeon Ball\\
   Departament de Matem\`atiques, \\
Universitat Polit\`ecnica de Catalunya, \\
M\`odul C3, Campus Nord,\\
c/ Jordi Girona 1-3,\\
08034 Barcelona, Spain \\
   \email{simeon@ma4.upc.edu}}
\end{document}